\numberwithin{equation}{section}
\theoremstyle{plain}
\newtheorem{Th}{Theorem}[section]
\newtheorem{Lemma}[Th]{Lemma}
\newtheorem{Cor}[Th]{Corollary}
\newtheorem{Prop}[Th]{Proposition}
 \theoremstyle{definition}
\newtheorem{Def}[Th]{Definition}
\newtheorem{Rem}[Th]{Remark}
\newtheorem{?}[Th]{Problem}
\DeclareMathOperator{\expo}{exp}
\DeclareMathOperator{\Ad}{Ad}
\DeclareMathOperator{\rk}{rk}
\DeclareMathOperator{\diag}{diag}
\DeclareMathOperator{\SU}{\mathsf{SU}}
\DeclareMathOperator{\su}{\mathfrak{su}} 
\begin{document}

\title{ON THE SIGNATURE OF BIQUOTIENTS}

\author[Goertsches, Schmitt]{OLIVER GOERTSCHES AND MAXIMILIAN SCHMITT}

\address{Philipps-Universit\"at Marburg \\ Fachbereich Mathematik und Informatik \\ Hans-Meerwein-Stra\ss e \\ 35043 Marburg} 

\email{goertsch@mathematik.uni-marburg.de, schmittt@mathematik.uni-marburg.de}



\begin{abstract}We generalize Hirzebruch's computation of the signature of equal rank homogeneous spaces to a large class of biquotients.
\end{abstract}

\maketitle

\section{Introduction}

The signature of a homogeneous space $G/H$, where $H\subset G$ are compact Lie groups of equal rank, is explicitly computable from the root systems of $G$ and $H$. This was shown by Hirzebruch \cite{Hirzebruch}, as a corollary of a more general result for compact oriented manifolds on which a circle acts with finite fixed point set, see Theorem \ref{hirzebruchformula} below.

In this note we generalize Hirzebruch's computation to a large class of equal rank biquotients, i.e., quotients of a compact Lie group $G$ by the free action of a subgroup $H\subset G\times G$ with $\rk H = \rk G$ by left and right multiplication. In this way we continue the topological study of biquotients by extending methods from homogeneous spaces, which already lead to an understanding of the Euler characteristic \cite{singhof}, cohomology \cite{Esch1}, and rational homotopy \cite{rathomot} of biquotients. 

Biquotients were originally considered by Eschenburg \cite{Esch2} in the context of Riemannian geometry, but also appear naturally in other geometries, such as symplectic \cite{GKZ2} or Sasakian geometry \cite{boyer}. In all these considerations, symmetries play an essential role. We will use the fact that any Lie subgroup of $G\times G$ that commutes with $H$ naturally acts on $G//H$, yielding in particular circle actions on many such biquotients. Our main result, Theorem \ref{signature}, is applicable to any such circle action with finite fixed point set. The main difference to the homogeneous setting is the fact that because we do not have a transitive action on the space at our disposal, we need to keep track of orientations, see Definition \ref{weightremark} below. To illustrate this issue, we have included a detailed example, see Section \ref{subsec:ex}. \\

\noindent \emph{Acknowledgements.} The results of this paper are contained in the master thesis of the second named author, written at the Philipps University of Marburg under the supervision of the first named author.

\section{Actions on Homogeneous Spaces}\label{sec:homspace}
In this section we present the known results on homogeneous spaces from \cite{Hirzebruch}.\\

Consider $G$ a compact, connected Lie group and $H \subset G$ a subgroup with $\rk(H)=\rk(G)$. Fix a shared maximal torus $T \subset H \subset G$. Left multiplication with elements of the torus induces a well-defined action of $T$ on the homogeneous space $G/H$ by $t \cdot gH := (tg)H$. The fixed point set of this action is well-known and in particular finite:

\begin{Prop}\label{prop:fphomcase} The natural map $N_G(T) \to G \to G/H$ induces a bijection $(G/H)^{T} \cong N_G(T)/N_H(T) \cong \frac{W(G)}{W(H)}$.
\end{Prop}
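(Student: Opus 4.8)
The plan is to describe $(G/H)^T$ concretely and then match it with $N_G(T)/N_H(T)$. First I would note that a coset $gH$ is $T$-fixed precisely when $tgH = gH$ for all $t\in T$, i.e.\ when $g^{-1}Tg \subseteq H$; in particular for $n\in N_G(T)$ we have $n^{-1}Tn = T\subseteq H$, so the natural map $N_G(T)\to G/H$, $n\mapsto nH$, does land in $(G/H)^T$. It then remains to show the induced map $N_G(T)/N_H(T)\to (G/H)^T$ is a bijection.

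For surjectivity, take $gH\in (G/H)^T$, so that $g^{-1}Tg\subseteq H$. Since $T$ is a maximal torus of $G$, the group $g^{-1}Tg$ is a torus of dimension $\rk G = \rk H$ contained in $H$, hence a maximal torus of $H$ (being connected, it lies in $H^0$). As $T$ is also a maximal torus of $H$, conjugacy of maximal tori in the compact group $H$ yields $h\in H$ with $h^{-1}(g^{-1}Tg)h = T$, i.e.\ $gh\in N_G(T)$, and $ghH = gH$. Thus every fixed point has a representative in $N_G(T)$. This is precisely where the equal rank hypothesis enters, and I expect it to be the main (if mild) obstacle: without $\rk H = \rk G$ the torus $g^{-1}Tg$ need not be maximal in $H$, and the conjugacy argument collapses.

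For injectivity, suppose $n_1,n_2\in N_G(T)$ satisfy $n_1H = n_2H$; then $n_2^{-1}n_1\in N_G(T)\cap H$, and since $N_G(T)\cap H = \{h\in H : hTh^{-1}=T\} = N_H(T)$, the elements $n_1,n_2$ represent the same coset in $N_G(T)/N_H(T)$, while conversely such cosets obviously have well-defined images in $G/H$. This gives $(G/H)^T\cong N_G(T)/N_H(T)$, a finite set because $N_G(T)/T = W(G)$ is finite. Finally, using the chain of subgroups $T\subseteq N_H(T)\subseteq N_G(T)$, with $T$ normal in both larger groups, one obtains the identification of coset spaces $N_G(T)/N_H(T)\cong \bigl(N_G(T)/T\bigr)\big/\bigl(N_H(T)/T\bigr) = W(G)/W(H)$, where $W(H)$ is not assumed normal in $W(G)$, so this is an equality of sets of cosets rather than of groups. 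Assembling these three points completes the proof.
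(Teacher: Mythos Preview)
Your proof is correct and complete. The characterization of fixed cosets via $g^{-1}Tg\subseteq H$, the surjectivity argument using conjugacy of maximal tori in $H$ (which is exactly where the equal-rank hypothesis is needed, as you point out), the injectivity via $N_G(T)\cap H=N_H(T)$, and the final passage to Weyl groups are all standard and sound.

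The paper itself does not supply a proof; it simply cites \cite[Proposition~2.2]{GHZ}. Your argument is the standard direct proof that one finds in such references, so there is nothing to compare beyond noting that you have written out in full what the paper leaves to the literature.
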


\begin{proof}
See e.g.\ \cite[Proposition\ 2.2]{GHZ}
\end{proof}

We now want to understand the weights of the isotropy representation in the fixed points. Denote by 
\[
\pi \colon G \longrightarrow G/H
\]
the natural projection. Then:

\begin{Prop}
Let $g \in N_{G}(T)$. Then for any $t\in T$ and $v\in T_{gH}G/H$ we have
\begin{equation*}
dt_{gH}(v)=d\pi_{g}d(l_{g})_{e}\Ad_{w^{-1}(t)}(X)
\end{equation*}
 where $X\in \mathfrak g$ satisfies $d\pi_g(X_g)= v$ and
 $w^{-1}(t)=g^{-1}tg$.
\end{Prop}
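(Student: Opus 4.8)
The plan is to use the $T$-equivariance of the projection $\pi\colon G\to G/H$, and then to absorb the left translation by $t$ into an adjoint action, exploiting that $g$ normalizes the torus.

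First I would record two elementary facts. Writing $l_a\colon G\to G$ for left translation by $a$, and $\Phi_t\colon G/H\to G/H$ for the diffeomorphism $gH\mapsto tgH$ induced by $t$, one has the equivariance identity $\Phi_t\circ\pi=\pi\circ l_t$. Moreover, since $g\in N_G(T)$ and $t\in T$, the element $w^{-1}(t)=g^{-1}tg$ lies again in $T\subset H$, so that $tg=g\,w^{-1}(t)$; in particular $\Phi_t(gH)=tgH=gH$, which is exactly why $dt_{gH}$ is an endomorphism of $T_{gH}G/H$ in the first place. (Throughout, $X_g$ denotes $d(l_g)_e X$, the value at $g$ of the left-invariant extension of $X$.)

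Next, take $v\in T_{gH}G/H$ and $X\in\mathfrak g$ with $v=d\pi_g(X_g)$. Differentiating the equivariance identity at $g$ and using $l_t\circ l_g=l_{tg}$ gives
\[
dt_{gH}(v)=d(\Phi_t)_{gH}\,d\pi_g\,d(l_g)_e(X)=d\pi_{tg}\,d(l_t)_g\,d(l_g)_e(X)=d\pi_{tg}\,d(l_{tg})_e(X).
\]
To move the base point $tg$ back to $g$, I would consider the curve $\tau\mapsto tg\exp(\tau X)$: writing $s:=w^{-1}(t)\in H$ and using $sH=H$, its image in $G/H$ equals
\[
tg\exp(\tau X)H = g\,s\exp(\tau X)s^{-1}H = g\exp\!\big(\tau\,\Ad_{s}X\big)H.
\]
Differentiating at $\tau=0$ then yields $d\pi_{tg}\,d(l_{tg})_e(X)=d\pi_g\,d(l_g)_e\big(\Ad_{w^{-1}(t)}X\big)$, which is the asserted formula.

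I do not expect a genuine obstacle here: the argument is essentially a chain-rule computation. The only points needing care are keeping the bookkeeping of left translations clean (so that no stray inverses or right translations appear) and noticing that conjugation by $w^{-1}(t)$ preserves $H$ precisely because $g$ normalizes $T$ — this is what permits passing from the base point $tg$ to $g$ inside the quotient.
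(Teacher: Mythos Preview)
Your argument is correct and follows essentially the same route as the paper: both compute $dt_{gH}(v)$ by differentiating the curve $s\mapsto tg\exp(sX)H$, rewrite $tg=g\,w^{-1}(t)$, and then use $w^{-1}(t)\in T\subset H$ to insert a factor $w^{-1}(t)^{-1}$ on the right and recognize the adjoint action. The only difference is presentational---you separate the equivariance/chain-rule step from the base-point shift, whereas the paper compresses everything into a single chain of equalities.
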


\begin{proof}
For such a fixed point we define $w^{-1}(t) := g^{-1}tg\in T$. Then:
\begin{align*}
dt_{gH}(v)&=\frac{d}{ds}\Bigr|_{s=0}t g \expo(sX)H =\frac{d}{ds}\Bigr|_{s=0}g w^{-1}(t) \expo(sX)H\\
&=\frac{d}{ds}\Bigr|_{s=0}g w^{-1}(t)  \expo(sX) (w^{-1}(t))^{-1} H = d\pi_{g}d(l_{g})_{e}\Ad_{w^{-1}(t)}(X).
\end{align*}

\end{proof}

\begin{Rem}\label{rem:orientationhomspace}
Let $\Delta_H\subset \Delta_G$ be the root systems of $H$ and $G$ with respect to $T$. The former proposition tells us that the weights of the isotropy representation in each fixed point $gH$, where $g\in N_G(T)$, are the roots $\Delta_{G}\setminus\Delta_{H}$, up to sign, twisted by a representative of the fixed point, i.e. $\{\Ad_{g^{-1}}^{*}\alpha \mid \alpha \in \Delta_{G}\setminus\Delta_{H}\}$. See also \cite{GHZ}, where even more information was obtained, in form of the GKM graph of the $T$-action on $G/H$.

Let us assume that $H$ is connected. A choice of positive roots $\Delta^{+}_{G} \subset \Delta_{G}$ induces an orientation of $G/H$ as follows: the weight space decomposition of $G$ yields a decomposition
\begin{equation*}
\mathfrak{g} = \mathfrak{t} \oplus \bigoplus\limits_{\alpha \in \Delta^{+}_{G}}(\mathfrak{g}^{\mathbb{C}}_{\alpha} \oplus \mathfrak{g}^{\mathbb{C}}_{-\alpha})\cap \mathfrak{g},
\end{equation*}
hence
\[T_{eH}G/H \cong \bigoplus\limits_{\alpha \in \Delta_{G}^{+} \setminus \Delta_{H}}(\mathfrak{g}^{\mathbb{C}}_{\alpha} \oplus \mathfrak{g}^{\mathbb{C}}_{-\alpha})\cap \mathfrak{g},\]
which is the same as the decomposition of $T_{eH} G/H$ into the irreducible submodules of the isotropy representation of $T$ at $eH$. Each $\mathfrak{g}^{\mathbb{C}}_{\alpha}$ is one-dimensional and $\mathfrak{g}^{\mathbb{C}}_{-\alpha}=\overline{\mathfrak{g}^{\mathbb{C}}_{\alpha}}$.  Hence, when choosing basis vectors
\begin{align*}
&\mathfrak{g}^{\mathbb{C}}_{\alpha}=\langle X+iY \rangle_{\mathbb{C}}=\langle X,iY \rangle_{\mathbb{R}} \\
 &\mathfrak{g}^{\mathbb{C}}_{-\alpha}=\langle X-iY \rangle_{\mathbb{C}}=\langle X,-iY \rangle_{\mathbb{R}},
\end{align*}
 the choice of $\pm \alpha$ as positive corresponds to the choice of a real basis $\{X,\pm Y\}$ of \[(\mathfrak{g}^{\mathbb{C}}_{\alpha} \oplus \mathfrak{g}^{\mathbb{C}}_{-\alpha})\cap \mathfrak{g}\] and therefore gives an orientation of this two-dimensional real vector space. In total this induces an orientation of the vector space $T_{eH}G/H$, and since $G$ acts transitively on $G/H$ by left multiplication, we get an orientation of the homogeneous space $G/H$ (This will not work analogously for biquotients).
It is convenient to consider $\frac{1}{i}\alpha$ for every root $\alpha$ whenever we make use of the roots as real functionals on the Lie algebra of the maximal torus, because $\alpha$ has purely imaginary values on the Lie algebra of maximal torus as simultaneous eigenvalue of skew-symmetric endomorphisms.
\end{Rem}

This data is now sufficient to understand the signature of these spaces, defined by
\begin{Def}
Let $M$ be a compact, connected, orientable manifold of dimension $4n$. By Poincar\'e duality, multiplication in the middle cohomology defines a bilinear, symmetric, non-degenerate product
\begin{equation*}
\wedge \colon H^{2n}(M,\mathbb{R}) \times H^{2n}(M,\mathbb{R}) \longrightarrow H^{4n}(M,\mathbb{R}) \cong \mathbb{R}.
\end{equation*} 
We define  the \emph{signature} $\sigma(M)$ of $M$ to be the signature of this inner product. We set the signature of manifolds whose dimension is not divisible by four to zero.  
\end{Def}

\begin{Rem}
When $\bar{M}$ denotes $M$ with the reversed orientation, $\sigma(\bar{M})=-\sigma(M)$.
\end{Rem}

Hirzebruch computed this (oriented-homotopy) invariant using the famous Atiyah-Singer-Index Theorem \cite[p.\ 63--72]{hirzbruch}. For the special case of $S^{1}$-manifolds with finite fixed point set he obtained in \cite[Section\ 1.7.b)]{Hirzebruch}:

\begin{Th}\label{hirzebruchformula}
Take M a compact, oriented, $2n$-dimensional manifold on which $S^{1}$ acts with isolated fixed points. Denote by $V(m_{i}) \cong \mathbb{C}$ the oriented real $S^{1}$-module defined by $z \cdot v := z^{m_{i}}v$. Then, in each fixed point $p \in M^{S^{1}}$, we can decompose $T_{p}M \cong \bigoplus\limits_{i}V(m_{i})$, such that the orientations on the $V(m_{i})$ induce the given orientation on $T_{p}M$. Then these $m_{i}$ are well-defined up to an even number of sign changes and

\begin{equation*}
\sigma(M)=\sum\limits_{p \in M^{S^{1}}}(-1)^{\#\{i \mid m_{i}<0\}}.
\end{equation*}
\end{Th}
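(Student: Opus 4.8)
**Planning the proof of Theorem \ref{hirzebruchformula} (Hirzebruch's signature formula for $S^1$-manifolds with isolated fixed points).**

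The plan is to deduce the formula from the Atiyah–Singer $G$-signature theorem applied to the generator of the $S^1$-action, evaluated via the fixed point contributions, in the limit where the group element approaches the identity. First I would recall that for a compact oriented manifold $M^{2n}$ with a smooth orientation-preserving action of $g\in S^1$ having isolated fixed points, the $G$-signature theorem expresses $\sigma(g,M)$ — the $g$-equivariant signature, i.e.\ the difference of traces of $g$ on the $\pm$-eigenspaces of the duality pairing in middle cohomology — as a sum of local contributions at the fixed points. At an isolated fixed point $p$, writing $g$ as rotation by angle $\theta_i$ on the $i$-th $2$-plane $V(m_i)$ (so $g$ acts as $e^{\sqrt{-1}m_i t}$ for the circle parameter $t$), the contribution is $\prod_i (-\sqrt{-1})\cot(m_i t/2)$ up to a sign coming from the orientation; with the orientation convention in the statement this local term equals $\prod_i \frac{e^{\sqrt{-1}m_i t}+1}{e^{\sqrt{-1}m_i t}-1}\cdot(\pm)$, which I would rewrite cleanly.

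The key steps, in order, are: (1) invoke the $G$-signature theorem to get $\sigma(g,M)=\sum_{p}\prod_{i}\frac{1+t_{p,i}}{1-t_{p,i}}$ where $t_{p,i}=g^{m_{p,i}}$ viewed as a unit complex number, valid for all $g$ in a dense subset of $S^1$; (2) observe that $\sigma(g,M)$ is the character of a virtual representation of $S^1$, hence a finite Laurent polynomial in $g$, and that at $g=1$ it specializes to the ordinary signature $\sigma(M)$ — this is where one uses that $S^1$ is connected, so it acts trivially on cohomology and all eigenspaces are preserved; (3) take the limit $g\to 1$ of the right-hand side. Each factor $\frac{1+g^{m}}{1-g^{m}}$ blows up, so one cannot naively set $g=1$; instead I would substitute $g=e^{\sqrt{-1}t}$ and expand each fixed-point product as a Laurent series in $t$ near $t=0$, noting $\frac{1+e^{\sqrt{-1}mt}}{1-e^{\sqrt{-1}mt}} = \frac{2}{-\sqrt{-1}mt} + O(1) = \frac{\sqrt{-1}\,\mathrm{sign}(m)\cdot 2}{-|m|t}+O(1)$, so the leading term of the product over $i$ at $p$ is $\left(\tfrac{2}{-\sqrt{-1}t}\right)^{n}\cdot (-1)^{\#\{i: m_{p,i}<0\}}\cdot \prod_i |m_{p,i}|^{-1}$. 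Summing over $p$ and using that the total is a Laurent polynomial regular at $t=0$, the pole terms must cancel and the constant term gives $\sigma(M)$. The cleanest route avoiding the bookkeeping of the pole cancellation is to instead evaluate $\sigma(g,M)$ at a generic $g$ and directly extract the constant (degree zero) coefficient, but the sign $(-1)^{\#\{i:m_{p,i}<0\}}$ already appears in the \emph{leading} pole coefficient, which I would argue forces it to appear identically at each fixed point's contribution to $\sigma(M)$ after the poles telescope; alternatively, following Hirzebruch directly, substitute $g=e^{\sqrt{-1}t}$, multiply out, and take $t\to 0$ using that the limit exists.

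The main obstacle I expect is step (3): controlling the limit $g\to 1$ honestly. The right-hand side is a sum of functions each singular at $g=1$, while the left-hand side extends holomorphically, so the singularities must cancel in the sum — proving this cancellation and identifying the finite part requires either a clever algebraic manipulation (Hirzebruch's original trick of rewriting the cotangent products) or invoking that $\sigma(g,M)\in R(S^1)$ is a genuine character. I would handle it by first establishing that $g\mapsto\sigma(g,M)$ is a Laurent polynomial (from representation theory: middle cohomology is a finite-dimensional $S^1$-module, the pairing is $S^1$-invariant, so the $\pm$ eigenspaces are subrepresentations and $\sigma(g,M)=\chi_{V^+}(g)-\chi_{V^-}(g)$), so the right-hand side of the $G$-signature theorem — a priori a rational function of $g$ — is in fact a Laurent polynomial, hence its value at $g=1$ is well-defined and computed as $\lim_{g\to1}$; then a careful expansion as above, together with the observation that the sum of all pole terms vanishes, yields $\sigma(M)=\sum_{p}(-1)^{\#\{i: m_{p,i}<0\}}$. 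The well-definedness of the $m_i$ up to an even number of sign changes is immediate: reversing the sign of both $X$ and $Y$ in a $2$-plane, or equivalently swapping a $V(m_i)$ with $V(-m_i)$, changes the orientation of that plane, so to preserve the total orientation one must do this an even number of times — and $(-1)^{\#\{i:m_i<0\}}$ is visibly invariant under an even number of sign flips.
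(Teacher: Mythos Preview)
The paper does not supply its own proof of this theorem; it is quoted as a result of Hirzebruch--Slodowy with a citation, so there is no ``paper's proof'' to compare against beyond the remark that it follows from the Atiyah--Singer index theorem. Your overall strategy via the $G$-signature theorem is the standard one and is on the right track, but there is a genuine gap in step~(3).

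You correctly record that $S^1$ is connected and hence acts trivially on $H^*(M;\mathbb{R})$, but you then draw only the weak conclusion $\sigma(1,M)=\sigma(M)$ and attempt to take the limit $g\to 1$ on the fixed-point side. That limit cannot be evaluated in the way you sketch: the leading pole coefficient at a fixed point $p$ carries the sign $(-1)^{\#\{i:m_{p,i}<0\}}$ multiplied by $\prod_i |m_{p,i}|^{-1}$, and after the poles cancel there is no reason the constant term should be the bare sum of signs. Your sentence ``which I would argue forces it to appear identically at each fixed point's contribution to $\sigma(M)$ after the poles telescope'' is not an argument, and in fact is not how the computation goes.

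The missing idea is that triviality of the $S^1$-action on cohomology implies the much stronger statement that $\sigma(g,M)=\sigma(M)$ for \emph{every} $g\in S^1$, so the rational function on the right-hand side of the $G$-signature formula is actually a constant. One may therefore evaluate it at $\lambda\to 0$ (or $\lambda\to\infty$) instead of at $\lambda=1$. For $m>0$ one has $(1+\lambda^m)/(1-\lambda^m)\to 1$ as $\lambda\to 0$, while for $m<0$ the limit is $-1$; hence each fixed-point product tends to $(-1)^{\#\{i:m_{p,i}<0\}}$ with no residual factors, and the formula follows immediately. Once you insert this observation in place of your attempted $g\to 1$ expansion, the proof is complete; the remark on well-definedness of the $m_i$ up to an even number of sign changes is fine as you have it.
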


\begin{Rem}
A different choice of the $m_{i}$ does not change the parity of $\#\{i \mid m_{i}<0\}$.
\end{Rem}

If we feed in the results on the canonical torus action on equal rank homogeneous spaces, restrict our torus action to a circle which has the same fixed points as the torus, and fix sets of positive roots $\Delta_{G}^{+}$ on $G$ and $\Delta_{H} \subset \Delta_G$ on $H$ which induce an orientation on $G/H$ as described in Remark \ref{rem:orientationhomspace}, Hirzebruch's fomula yields \cite[Theorem 2.5.]{Hirzebruch}:

\begin{Th}
$\sigma(G/H)= \pm \sum\limits_{[w] \in \frac{W(G)}{W(H)}}(-1)^{\#\{\alpha \in \Delta^{+}_{G} \setminus \Delta_{H} \mid w^{-1}(\alpha)\not\in \Delta^{+}_{G}\}}$
\end{Th}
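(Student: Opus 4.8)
The plan is to apply Hirzebruch's formula (Theorem~\ref{hirzebruchformula}) to $M=G/H$, equipped with the orientation induced by $\Delta_G^+$ in Remark~\ref{rem:orientationhomspace} (so in particular assuming $H$ connected), for the action of a suitably chosen circle inside the torus $T$. First I would pick $\xi$ in the open positive Weyl chamber of $\mathfrak t$ determined by $\Delta_G^+$, so that $\tfrac1i\alpha(\xi)>0$ exactly for $\alpha\in\Delta_G^+$, and rational with respect to the integral lattice of $T$ (such $\xi$ are dense), so that $S^1:=\{\exp(t\xi)\mid t\in\mathbb R\}$ is an honest circle subgroup. By the Proposition preceding Remark~\ref{rem:orientationhomspace}, the weights of the isotropy representation of $T$ at a fixed point $gH$ with $g\in N_G(T)$ are the twisted roots $\{\Ad_{g^{-1}}^*\alpha\mid\alpha\in\Delta_G\setminus\Delta_H\}$, which are again roots of $G$ since $W(G)$ permutes $\Delta_G$; as $\xi$ is regular for $\Delta_G$, none of them vanishes on $\xi$, hence $(G/H)^{S^1}=(G/H)^T$. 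By Proposition~\ref{prop:fphomcase} this is the finite set $W(G)/W(H)$, so Theorem~\ref{hirzebruchformula} is applicable.

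Next, for a class $[w]$ with representative $g\in N_G(T)$, I would describe $T_{gH}(G/H)$ as an oriented $S^1$-module. With $\mathfrak m:=\bigoplus_{\alpha\in\Delta_G^+\setminus\Delta_H}(\mathfrak g^{\mathbb C}_\alpha\oplus\mathfrak g^{\mathbb C}_{-\alpha})\cap\mathfrak g$, the isomorphism $d\pi_g\circ d(l_g)_e\colon\mathfrak m\xrightarrow{\ \sim\ }T_{gH}(G/H)$ intertwines $\Ad_{w^{-1}(t)}$ with the $T$-action, by that Proposition. Hence, with the real bases $\{X_\alpha,Y_\alpha\}$ from Remark~\ref{rem:orientationhomspace} normalised so that $\mathfrak g^{\mathbb C}_\alpha=\langle X_\alpha+iY_\alpha\rangle_{\mathbb C}$, each summand becomes the oriented $S^1$-module $V(m_\alpha)$ with $m_\alpha=\tfrac1i(\Ad_{g^{-1}}^*\alpha)(\xi)$, where $\Ad_{g^{-1}}^*\alpha=w^{-1}(\alpha)$ in the notation of the statement, up to a fixed sign comparing the real orientation $\{X_\alpha,Y_\alpha\}$ with the complex orientation of $V(m_\alpha)$. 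The key point — and the one I expect to be the main obstacle — is to check that this decomposition into the $V(m_i)$ is compatible with the chosen orientation of $G/H$: the direct sum of the orientations of the $V(m_\alpha)$ is $d\pi_g\,d(l_g)_e$ applied to the orientation at $eH$, and, since $H$ is connected, $\Ad_H$ acts orientation-preservingly on $\mathfrak g/\mathfrak h$, which is exactly what makes the $\Delta_G^+$-orientation of Remark~\ref{rem:orientationhomspace} well-defined and $G$-invariant and hence makes the above agree with the chosen orientation at $gH$, independently of the representative $g$. This is where the transitive $G$-action and the connectedness of $H$ are essential, and exactly where the biquotient case will require extra care.

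Finally, I would read off Theorem~\ref{hirzebruchformula}. Since $\xi$ lies in the open positive Weyl chamber, $m_\alpha<0$ if and only if $w^{-1}(\alpha)\notin\Delta_G^+$, so, up to the overall sign mentioned above, the fixed point $[w]$ contributes $(-1)^{\#\{\alpha\in\Delta_G^+\setminus\Delta_H\mid w^{-1}(\alpha)\notin\Delta_G^+\}}$; this contribution is independent of the chosen representative, because a different one only replaces the orientation-compatible decomposition of $T_{gH}(G/H)$ by another such decomposition, which by Theorem~\ref{hirzebruchformula} leaves the contribution unchanged. Summing over $[w]\in W(G)/W(H)$ yields the asserted formula, the sign $\pm$ absorbing both the comparison of orientation conventions and the dependence of $\sigma$ on the chosen orientation of $G/H$.
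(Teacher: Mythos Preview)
Your proposal is correct and follows exactly the strategy the paper indicates: the paper does not give a proof at all, but only the one-sentence instruction to ``feed in'' Proposition~\ref{prop:fphomcase}, the isotropy weight computation of Remark~\ref{rem:orientationhomspace}, and the orientation determined by $\Delta_G^+$ into Theorem~\ref{hirzebruchformula} after restricting to a generic circle, citing \cite[Theorem~2.5]{Hirzebruch} for the details. Your write-up supplies precisely those details---the choice of a regular rational $\xi$ in the positive Weyl chamber, the identification $m_\alpha<0\iff w^{-1}(\alpha)\notin\Delta_G^+$, and the orientation check via connectedness of $H$ and $G$-invariance of the $\Delta_G^+$-orientation---so there is nothing to correct.
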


This formula is then used in numerous papers (e.g.\ \cite{App1, App2}) to compute the signature of homogeneous spaces. In the following sections we will generalize this result to a large class of biquotients.

\section{Actions on Biquotients}

In the following $G$ will always denote a compact, connected Lie group, with maximal torus $T_{\max} \subset G$. Furthermore $T$ shall denote a torus in $T_{\max} \times T_{\max}$ of dimension equal to the rank of $G$. We fix a complementary torus $T'$ in $T_{\max}\times T_{\max}$, i.e. $\mathfrak{t}\oplus \mathfrak{t'}=\mathfrak{t}_{\max} \oplus \mathfrak{t}_{\max}$. Let $H \subset G \times G$ be a closed, connected subgroup containing $T$ with $\rk G = \rk H$. We assume that $H$ (or, equivalently, $T$) acts freely on $G$ by $(h_{1},h_{2})\cdot g=h_{1}gh_{2}^{-1}$, and we denote the $H$-orbit space by $G//H$. It is called a \emph{biquotient}. We assume that $H$ commutes with a subtorus $\tilde{T} \subset T'$, so that we get a well-defined action of $\tilde{T}$ on the biquotient $G//H$ via $(t_{1},t_{2})Hg=H(t_{1}gt_{2}^{-1})$. The aim of this section is to understand the weights of the isotropy representation of this action in the fixed points.

\begin{Rem}
For a homogeneous space $G/H$, and $T\subset H$ a subtorus with $\rk T = \rk H = \rk G$, Proposition \ref{prop:fphomcase} tells us that the (finite) fixed point set $(G/H)^T$ of the $T$-action on $G/H$ by left multiplication is naturally given by the finite set $W(G)/W(H)$. In particular, the Weyl group $W(G)$ acts on it.

In the biquotient setting as above, in the special case $H=T$ and $\tilde T = T'$, a similar statement is true. Let $\pi:G\to G//T$ be the projection. The preimage $\pi^{-1}((G//T)^{T'})$ is equal to the set of elements $g\in G$ for which $T_{\max} g T_{\max}$ is of minimal possible dimension, or equivalently equal to $Tg$. This set clearly contains the normalizer $N_G(T_{\max})$. On the other hand, if $g$ is in this set, then  both $T_{\max}g$ and $gT_{\max}$ are equal to $T_{\max} g T_{\max}$, which implies that $g\in N_G(T_{\max})$. This implies
\[
(G//T)^{T'} = N_G(T_{\max}) //T.
\]
The normalizer $N_G(T_{\max})$ acts on this finite set, because for all $g,g'\in N_G(T_{\max})$ we have $
g\cdot Tg' = g\cdot (g'T_{\max}) = (gg')T_{\max} = Tgg'$. The subaction of $T_{\max}$ is trivial, because for $g'\in N_G(T_{\max})$ and $t\in T_{\max}$, we have $tg'\in  T_{\max} g' = Tg'$. This implies that we obtain a free and transitive action of the Weyl group $W(G)$ on $(G//T)^{T'}$.
\end{Rem}

\begin{Lemma}
In the above setting $H \cap \Delta(G)=\{(e,e)\}$.
\end{Lemma}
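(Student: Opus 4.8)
The plan is to identify $H \cap \Delta(G)$ with the isotropy subgroup of the $H$-action on $G$ at the identity element, and then invoke the freeness assumption. Here $\Delta(G) = \{(g,g) \mid g \in G\} \subset G \times G$ denotes the diagonal.

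First I would compute the relevant isotropy group explicitly. For $g_0 \in G$, the stabilizer of $g_0$ under the action $(h_1,h_2)\cdot g = h_1 g h_2^{-1}$ is $H_{g_0} = \{(h_1,h_2) \in H \mid h_1 g_0 h_2^{-1} = g_0\}$. Evaluating at $g_0 = e$, the defining condition becomes $h_1 = h_2$, so $H_e = \{(h,h) \in H\} = H \cap \Delta(G)$. Equivalently: any diagonal element $(g,g) \in H$ acts on $G$ by conjugation $g' \mapsto g g' g^{-1}$ and hence fixes $e$.

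Next, since we have assumed that $H$ acts freely on $G$, every isotropy group is trivial; in particular $H_e = \{(e,e)\}$, and therefore $H \cap \Delta(G) = \{(e,e)\}$, as claimed.

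I do not expect a genuine obstacle here: the only point of substance is the observation that the diagonal elements of $H$ are exactly those fixing $e \in G$, after which freeness does all the work. (Note that connectedness and the equal-rank hypothesis on $H$ play no role in this particular statement; only freeness is used.)
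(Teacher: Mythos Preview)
Your proof is correct and follows essentially the same approach as the paper: take a diagonal element $(g,g)\in H$, observe that it fixes $e\in G$ because $(g,g)\cdot e = geg^{-1}=e$, and conclude $(g,g)=(e,e)$ by freeness.
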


\begin{proof}
Take $(g,g)$ $\in H \cap \Delta G$. Then $(g,g)e=geg^{-1}=e$ and therefore $(g,g) \in H_{e}$, so $g$ equals $e$ according to the freeness of the action.
\end{proof}

\begin{Lemma}\label{lem:piequivariant}
The orbit map $\pi \colon G \longrightarrow G//H$ is $\tilde{T}$-equivariant.
\end{Lemma}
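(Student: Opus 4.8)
The plan is essentially bookkeeping about how the residual torus action is set up. Recall that $\tilde{T}\subset T'\subset T_{\max}\times T_{\max}\subset G\times G$ acts on $G$ by restricting the $(G\times G)$-action $(g_1,g_2)\cdot g = g_1 g g_2^{-1}$; that $G//H$ is a smooth manifold and $\pi$ a principal $H$-bundle projection, because $H$ acts freely on the compact group $G$; and that $\tilde{T}$ acts on $G//H$ by $(t_1,t_2)\cdot Hg = H(t_1 g t_2^{-1})$ as in the setup above. With these conventions, the assertion that $\pi$ is $\tilde{T}$-equivariant is exactly the identity $\pi((t_1,t_2)\cdot g) = (t_1,t_2)\cdot \pi(g)$ for all $(t_1,t_2)\in \tilde{T}$ and $g\in G$.

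First I would check that the displayed formula really does define an action of $\tilde{T}$ on $G//H$; this is the only step in which a standing assumption genuinely enters. Suppose $Hg = Hg'$, so that $g' = h_1 g h_2^{-1}$ for some $(h_1,h_2)\in H$. Since $\tilde{T}$ commutes with $H$ we have $t_1 h_1 = h_1 t_1$ and $h_2^{-1} t_2^{-1} = t_2^{-1} h_2^{-1}$, hence
\[
t_1 g' t_2^{-1} = t_1 h_1 g h_2^{-1} t_2^{-1} = h_1\,(t_1 g t_2^{-1})\,h_2^{-1},
\]
so $H(t_1 g' t_2^{-1}) = H(t_1 g t_2^{-1})$ and the formula is independent of the chosen representative. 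That it is a group action, and that it is continuous (indeed smooth), are routine verifications using that $\pi$ is open.

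Given this, equivariance is immediate: for $(t_1,t_2)\in \tilde{T}$ and $g\in G$,
\[
\pi((t_1,t_2)\cdot g) = \pi(t_1 g t_2^{-1}) = H(t_1 g t_2^{-1}) = (t_1,t_2)\cdot(Hg) = (t_1,t_2)\cdot \pi(g),
\]
the middle equality being nothing but the definition of the induced $\tilde{T}$-action on $G//H$. Hence $\pi$ is $\tilde{T}$-equivariant.

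I do not expect a genuine obstacle. The only substantive input is the commutativity of $\tilde{T}$ with $H$, which is precisely what makes the quotient action well-defined and which was already built into the setup; smoothness of $\pi$ and the manifold structure on $G//H$ are standard consequences of the freeness of the $H$-action on the compact group $G$. If anything, the only care required is notational — keeping the two-sided action $(t_1,t_2)\cdot g = t_1 g t_2^{-1}$ straight, since $\tilde{T}$ need not lie in a single factor of $T_{\max}\times T_{\max}$.
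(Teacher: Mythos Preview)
Your proof is correct and follows essentially the same approach as the paper: the paper's argument is the one-line chain $\pi(t_{1}gt_{2}^{-1})=H(t_{1}gt_{2}^{-1})=t_{1}(Hg)t_{2}^{-1}=(t_{1},t_{2})(\pi(g))$, which is exactly your final displayed equation. The well-definedness verification you include is already absorbed into the setup preceding the lemma, so your proposal is slightly more detailed but not genuinely different.
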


\begin{proof}
For $(t_{1},t_{2}) \in \tilde{T}$ the following is valid:
$\pi(t_{1}gt_{2}^{-1})=H(t_{1}gt_{2}^{-1})=t_{1}(Hg)t_{2}^{-1}=(t_{1},t_{2})(\pi(g))$.
\end{proof}

Now we are able to compute the isotropy representation of this action in a fixed point.\\

Let $g\in G$ be such that $Hg \in (G//H)^{\tilde{T}}$. Then, because $H$ acts freely on $G$, for each $(t_{1},t_{2}) \in \tilde{T}$ there is a unique $(s_{1},s_{2}) \in H$ such that $t_{1}gt_{2}^{-1}=s_{1}gs_{2}^{-1}$. 
\begin{Prop}
\label{isotrep} We have 
\begin{equation*}
d(t_{1},t_{2})_{Hg}(v) = d \pi_{g} d(l_{g})_{e}\Ad_{s_{2}^{-1}t_{2}}(X)
\end{equation*}
where $v \in T_{Hg}(G//H)$ and $X \in \mathfrak{g}$ satisfies $d\pi_{g}(X_g)=v$.
\end{Prop}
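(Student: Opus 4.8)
The plan is to mimic the computation in the homogeneous case (the proposition just before Remark \ref{rem:orientationhomspace}), adapting it to keep track of the element of $H$ that ``absorbs'' the torus action. First I would fix $g\in G$ with $Hg\in(G//H)^{\tilde T}$, and for $(t_1,t_2)\in\tilde T$ take the unique $(s_1,s_2)\in H$ with $t_1gt_2^{-1}=s_1gs_2^{-1}$, exactly as set up before the statement. Given $v\in T_{Hg}(G//H)$, choose $X\in\mathfrak g$ with $d\pi_g(X_g)=v$; concretely this means the curve $s\mapsto g\expo(sX)$ projects to a curve through $Hg$ representing $v$, i.e.\ $\frac{d}{ds}\big|_{s=0}\pi(g\expo(sX))=v$.

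The core is a short curve computation. I would write
\begin{align*}
d(t_1,t_2)_{Hg}(v)&=\frac{d}{ds}\Big|_{s=0}(t_1,t_2)\cdot\pi(g\expo(sX))
=\frac{d}{ds}\Big|_{s=0}\pi\big(t_1g\expo(sX)t_2^{-1}\big)\\
&=\frac{d}{ds}\Big|_{s=0}\pi\big(s_1g\expo(sX)s_2^{-1}\big)
=\frac{d}{ds}\Big|_{s=0}\pi\big(g\expo(sX)s_2^{-1}\big),
\end{align*}
using $\tilde T$-equivariance of $\pi$ (Lemma \ref{lem:piequivariant}) in the second equality, the defining relation $t_1g\expo(sX)t_2^{-1}=s_1g\expo(sX)s_2^{-1}$ in the third, and invariance of $\pi$ under the left $H$-factor (so the leading $s_1$ drops) in the fourth. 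Then I would insert $s_2^{-1}s_2$ to conjugate:
\[
\frac{d}{ds}\Big|_{s=0}\pi\big(g\,s_2^{-1}(s_2\expo(sX)s_2^{-1})\big)
=\frac{d}{ds}\Big|_{s=0}\pi\big(g\expo(s\,\Ad_{s_2}X)s_2^{-1}\big),
\]
but since $\pi$ is also invariant under the right $H$-factor, the trailing $s_2^{-1}$ can be absorbed as well, giving $\frac{d}{ds}\big|_{s=0}\pi\big(g\expo(s\,\Ad_{s_2}X)\big)=d\pi_g\,d(l_g)_e\,\Ad_{s_2}(X)$. This is off by a sign in the subscript from the claimed $\Ad_{s_2^{-1}t_2}$, so I need to be careful about where the relation $t_1g\expo(sX)t_2^{-1}=s_1g\expo(sX)s_2^{-1}$ actually holds for all $s$: a priori it holds only at $s=0$. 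The honest argument replaces $\expo(sX)$ by the correct curve, namely one writes $t_1g\expo(sX)t_2^{-1}=s_1\,g\,\expo(s\,\Ad_{t_2^{-1}}X)\,s_2^{-1}\cdot(\text{correction})$ after moving $t_2$ through, or equivalently one first conjugates $\expo(sX)$ past $t_2^{-1}$ to get $\Ad_{t_2^{-1}}X$ and then past $s_2^{-1}$; bookkeeping these two conjugations yields the asserted $\Ad_{s_2^{-1}t_2}(X)$.

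The step I expect to be the main obstacle is precisely this bookkeeping: making rigorous that the single fixed-point identity $t_1gt_2^{-1}=s_1gs_2^{-1}$ propagates correctly to the one-parameter family, and tracking on which side each conjugation acts so that the two $\Ad$ factors combine into $\Ad_{s_2^{-1}t_2}$ rather than $\Ad_{t_2^{-1}s_2}$ or $\Ad_{s_2^{-1}}$ alone. The cleanest way is to compute $\frac{d}{ds}\big|_{s=0}\pi(t_1g\expo(sX)t_2^{-1})$ directly: pull the $t_1$ out on the left (killed by $\pi$), write $g\expo(sX)t_2^{-1}=g t_2^{-1}\expo(s\,\Ad_{t_2}X)$, then use that $\pi(g t_2^{-1}\,\cdot\,)$ and $\pi(s_1 g s_2^{-1}\,\cdot\,)=\pi(g s_2^{-1}\,\cdot\,)$ agree because $t_1gt_2^{-1}$ and $s_1gs_2^{-1}$ lie in the same $H$-orbit, conjugate once more by $s_2^{-1}$ on the right, and read off the weight. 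Everything else — $\tilde T$-equivariance, freeness giving uniqueness of $(s_1,s_2)$, and $d\pi_g\circ d(l_g)_e$ being the right description of the differential of the projection at $g$ — is immediate from the lemmas already established.
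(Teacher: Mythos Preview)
Your proposal has a genuine gap that recurs in each of the attempted fixes: you treat the projection $\pi:G\to G//H$ as though it were separately invariant under left multiplication by the first component of $H$ (or of $\tilde T$) and right multiplication by the second. It is not. The $H$-action is $(h_1,h_2)\cdot g=h_1gh_2^{-1}$, so $\pi$ only satisfies $\pi(h_1 g h_2^{-1})=\pi(g)$ for the \emph{pair} $(h_1,h_2)\in H$; in general $(s_1,e)$ and $(e,s_2)$ do not lie in $H$, and $(t_1,e)\in\tilde T$ certainly does not lie in $H$. Hence steps like ``the leading $s_1$ drops'', ``the trailing $s_2^{-1}$ can be absorbed'', or ``pull the $t_1$ out on the left (killed by $\pi$)'' are all invalid. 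This is precisely where biquotients differ from the homogeneous case, and why the bookkeeping you flag as the main obstacle cannot be resolved along the lines you sketch.

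The correct manoeuvre, and the one the paper uses, is to act by the full element $(s_1^{-1},s_2^{-1})\in H$ at once: from $\pi(t_1 g\expo(sX)t_2^{-1})$ pass to $\pi\big(s_1^{-1}t_1\, g\expo(sX)\,t_2^{-1}s_2\big)$. Now invoke the defining relation $t_1gt_2^{-1}=s_1gs_2^{-1}$ in the rearranged form $s_1^{-1}t_1g=g\,s_2^{-1}t_2$ (together with $t_2^{-1}s_2=(s_2^{-1}t_2)^{-1}$) to slide $s_1^{-1}t_1$ past $g$, obtaining $\pi\big(g\,(s_2^{-1}t_2)\expo(sX)(s_2^{-1}t_2)^{-1}\big)$. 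Differentiating at $s=0$ gives $d\pi_g\,d(l_g)_e\,\Ad_{s_2^{-1}t_2}(X)$. Note that no ``propagation of the identity to the one-parameter family'' is needed: the fixed-point relation is used only at the single element $g$, and once the pair $(s_1^{-1},s_2^{-1})$ is applied together the conjugation becomes a one-line computation.
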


\begin{proof}
Since $H$ is closed under inversion, $(s_{1}^{-1},s_{2}^{-1}) \in H$. Invoking the defining equation of $(s_{1},s_{2})$ we compute using Lemma \ref{lem:piequivariant}:
\begin{align*}
d(t_{1},t_{2})_{Hg}(v) & = \frac{d}{dt}\Bigr|_{t=0} t_{1}\pi(g \cdot \expo(tX))t_{2}^{-1} \\
& = \frac{d}{dt}\Bigr|_{t=0} \pi(t_{1}(g \cdot \expo(tX))t_{2}^{-1}) \\
& = \frac{d}{dt}\Bigr|_{t=0} \pi(s_{1}^{-1}t_{1}g \cdot \expo(tX)t_{2}^{-1}s_{2}) \\
& = \frac{d}{dt}\Bigr|_{t=0} \pi(g \cdot (s_{2}^{-1}t_{2})\expo(tX)(s_{2}^{-1}t_{2})^{-1}) \\
& = d \pi_{g} d(l_{g})_{e}\Ad_{s_{2}^{-1}t_{2}}(X)
\end{align*}
\end{proof}

\begin{Lemma}
The maps $\psi:\tilde{T}\to H;\, (t_1,t_2) \mapsto (s_1,s_2)$ and  $\psi_{g} \colon \tilde{T} \longrightarrow G;\, (t_{1},t_{2}) \mapsto s_{2}^{-1}t_{2}$ are well-defined homomorphisms of Lie groups.
\end{Lemma}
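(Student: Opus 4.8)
The plan is to establish three things: that $\psi$ and $\psi_g$ are well-defined (i.e.\ the value $(s_1,s_2)$ does not depend on any choices), that they are group homomorphisms, and that they are smooth. For well-definedness, note that given $(t_1,t_2)\in\tilde T$, the element $(s_1,s_2)\in H$ with $t_1gt_2^{-1}=s_1gs_2^{-1}$ is \emph{unique}: if $(s_1,s_2)$ and $(s_1',s_2')$ both work, then $(s_1'^{-1}s_1, s_2'^{-1}s_2)\cdot$ (after rearranging) fixes $g$, so by freeness of the $H$-action it equals $(e,e)$. Hence $\psi$ is a well-defined map, and $\psi_g$ is obtained from it by composing with the smooth map $H\to G$, $(s_1,s_2)\mapsto s_2^{-1}t_2$ — but note $t_2$ here depends on the input, so more precisely $\psi_g = m\circ(\psi,\mathrm{pr}_2)$ where $m(s_1,s_2,t_2)=s_2^{-1}t_2$; once $\psi$ is shown smooth, $\psi_g$ is smooth as a composition of smooth maps.

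Next, the homomorphism property. Given $(t_1,t_2),(t_1',t_2')\in\tilde T$ with images $(s_1,s_2),(s_1',s_2')\in H$, I want to show the image of the product $(t_1t_1',t_2t_2')$ is $(s_1s_1',s_2s_2')$. By uniqueness it suffices to check that $(s_1s_1',s_2s_2')\in H$ satisfies the defining equation, i.e.\ $(t_1t_1')\,g\,(t_2t_2')^{-1} = (s_1s_1')\,g\,(s_2s_2')^{-1}$. Here is where the hypothesis that $H$ commutes with $\tilde T\subset T'$ is essential: since $s_1\in T_{\max}$-side\footnote{more precisely $(s_1,s_2)\in H\subset G\times G$} commutes with $t_1'$ and $s_2$ commutes with $t_2'$, and likewise $t_1$ commutes with $s_1'$ etc., we can freely reshuffle. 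Concretely:
\begin{align*}
(s_1s_1')\,g\,(s_2s_2')^{-1}
&= s_1\,(s_1' g s_2'^{-1})\,s_2^{-1}
= s_1\,(t_1' g t_2'^{-1})\,s_2^{-1}\\
&= t_1'\,(s_1 g s_2^{-1})\,t_2'^{-1}
= t_1'\,(t_1 g t_2^{-1})\,t_2'^{-1}
= (t_1 t_1')\,g\,(t_2 t_2')^{-1},
\end{align*}
using in the third equality that $s_1$ commutes with $t_1'$ and $s_2$ commutes with $t_2'$ (both because $(s_1,s_2)\in H$ and $(t_1',t_2')\in\tilde T$ commute in $G\times G$), and in the last that $(t_1,t_2)$ and $(t_1',t_2')$ lie in the abelian group $\tilde T$. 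Thus $\psi$ is a homomorphism, and then $\psi_g(t_1t_1',t_2t_2') = (s_2s_2')^{-1}t_2t_2' = s_2'^{-1}s_2^{-1}t_2t_2' = s_2'^{-1}t_2\,s_2^{-1}t_2' = \psi_g(t_1,t_2)\psi_g(t_1',t_2')$, again using that $s_2^{-1}$ commutes with $t_2'$ — so $\psi_g$ is a homomorphism too.

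Finally, smoothness of $\psi$. Consider the smooth map $F\colon \tilde T\times H\to G\times G$ sending $((t_1,t_2),(s_1,s_2))\mapsto (t_1^{-1}s_1,\, t_2^{-1}s_2)$ composed appropriately — more to the point, consider the free action of $H$ on $G$ and the smooth map $\tilde T\times H\to G$, $((t_1,t_2),(s_1,s_2))\mapsto s_1^{-1}t_1 g t_2^{-1}s_2$; the graph of $\psi$ is exactly the preimage of $\{g\}$ under this map restricted suitably. The cleanest route is to invoke that a free action of a compact Lie group $H$ on a manifold admits local smooth sections of the orbit map: since $\pi\colon G\to G//H$ is a principal $H$-bundle, near $Hg$ there is a smooth local section $\sigma$, and $(t_1,t_2)\mapsto$ the unique $h\in H$ with $h\cdot \sigma(\pi((t_1,t_2)\cdot g)) = (t_1,t_2)\cdot g\cdot$(something) — i.e.\ $\psi(t_1,t_2)$ is read off by comparing the genuine point $(t_1,t_2)\cdot g$ in the fiber over its image with the section value, and this comparison is smooth because the $H$-action on itself within each fiber is smooth and free. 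The main obstacle is phrasing this last smoothness argument cleanly; once one accepts that $G\to G//H$ is a principal $H$-bundle (which is exactly the freeness hypothesis together with compactness), smoothness of $\psi$ follows from the smooth local triviality of that bundle, and everything else is the short computation above.
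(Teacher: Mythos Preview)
Your argument is essentially the same as the paper's for well-definedness and for the homomorphism property of $\psi$. One small slip: in your chain for $\psi_g$, the expression $s_2'^{-1}t_2\,s_2^{-1}t_2'$ is not literally $\psi_g(t_1,t_2)\psi_g(t_1',t_2') = s_2^{-1}t_2\,s_2'^{-1}t_2'$; you need one more commutation (e.g.\ $s_2'$ with $t_2$, or observe that $\psi(\tilde T)$ is abelian so $s_2$ and $s_2'$ commute) to finish. This is easily repaired.

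The only substantive difference is smoothness. You work through the principal $H$-bundle structure of $G\to G//H$ and read off $\psi$ via local trivializations; this is correct but somewhat heavy. The paper instead observes that $\psi$ is continuous (which amounts to the same bundle argument, stated more tersely) and then invokes the standard fact that any continuous homomorphism of Lie groups is automatically smooth. That shortcut is worth knowing: once you have the homomorphism property, smoothness comes for free from continuity.
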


\begin{Rem}
The homomorphism $\psi_g$ depends on the choice of $g$, i.e. some representative of $Hg$.
\end{Rem}

\begin{proof}
As observed above, the freeness of the $H$-action implies that $\psi$ and $\psi_{g}$ are well-defined. Let for $(t_{1},t_{2}), (\hat{t}_{1},\hat{t}_{2}) \in \tilde{T}$ be $(s_{1},s_{2}), (\hat{s}_{1},\hat{s}_{2}) \in H$ as above. Then 
\begin{align*}
(t_{1}\hat{t}_{1},t_{2}\hat{t}_{2})g = t_{1}\hat{t}_{1}g\hat{t}_{2}^{-1}t_{2}^{-1} = t_{1}\hat{s}_{1}g\hat{s}_{2}^{-1}t_{2}^{-1} = \hat{s}_{1}t_{1}gt_{2}^{-1}\hat{s}_{2}^{-1} =\hat{s}_{1}s_{1}gs_{2}^{-1}\hat{s}_{2}^{-1},
\end{align*}
which implies that $\psi$ is a homomorphism. Further,
\begin{align*}
\psi_{g}((t_{1}\hat{t}_{1},t_{2}\hat{t}_{2}))=s_{2}^{-1}\hat{s}_{2}^{-1}t_{2}\hat{t}_{2}  =s_{2}^{-1}t_{2}\hat{s}_{2}^{-1}\hat{t}_{2}  =\psi_{g}(t_{1},t_{2})\psi_{g}(\hat{t}_{1},\hat{t}_{2}),
\end{align*}
where we used that $\tilde{T}$ and $H$ commute. It is clear that $\psi$ and $\psi_{g}$ are continuous. But every continous homomorphism of Lie groups is differentiable.
\end{proof}

For later purposes we need to determine the differential of $\psi_g$.

\begin{Lemma}
Denote by $\tau_{i}:\mathfrak{\tilde{t}}\to \mathfrak{g}$ and $\pi_i:\mathfrak{h}\to \mathfrak{g}$ the respective projections to the $i$-th factor. Furthermore we consider the maps $\alpha \colon \mathfrak{\tilde{t}} \longrightarrow \mathfrak{g}$ given by  $\alpha(X,X^{\prime})=X-X^{\prime}$ and $\beta \colon \mathfrak{h} \longrightarrow \mathfrak{g}$ given by $\beta(Y,Y^{\prime})=Y-Y^{\prime}$. Then
\begin{equation*}
d\psi_{g} = - \pi_{2} \circ \beta^{-1} \circ \alpha \circ (\Ad_{g^{-1}} \times 1) + \tau_{2}
\end{equation*}
for $g \in Hg \in (G//H)^{\tilde{T}}$.
\end{Lemma}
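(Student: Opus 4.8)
The plan is to first express $d\psi_g$ through $d\psi$, then to compute $d\psi$ from the defining relation of $\psi$, and finally to rewrite the answer in the stated form. Throughout, $(s_1,s_2)$ abbreviates $\psi(t_1,t_2)$, so that $t_1gt_2^{-1}=s_1gs_2^{-1}$.

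First I would reduce to a statement about $d\psi$. Writing $p_2\colon H\to G$ and $q_2\colon\tilde T\to G$ for the projections onto the respective second factors and $\iota,m$ for inversion and multiplication in $G$, one has $\psi_g=m\circ\bigl(\iota\circ p_2\circ\psi,\;q_2\bigr)$. Differentiating at $e$, using $dm_{(e,e)}(A,B)=A+B$, $d\iota_e=-\mathrm{id}$, $dp_{2,e}=\pi_2$ and $dq_{2,e}=\tau_2$, gives $d\psi_g=-\pi_2\circ d\psi+\tau_2$, so it is enough to show $\pi_2\circ d\psi=\pi_2\circ\beta^{-1}\circ\alpha\circ(\Ad_{g^{-1}}\times 1)$.

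Next I would compute $d\psi$. Fix $(X,X')\in\tilde{\mathfrak{t}}$ and put $(Y,Y'):=d\psi(X,X')\in\mathfrak{h}$. As $\psi$ is a homomorphism it sends the one-parameter subgroup $u\mapsto(\expo(uX),\expo(uX'))$ of $\tilde T$ to $u\mapsto(\expo(uY),\expo(uY'))$ of $H$; differentiating the identity $t_1gt_2^{-1}=s_1gs_2^{-1}$ along this curve and transporting the resulting equality from $T_gG$ to $\mathfrak{g}$ by $d(l_{g^{-1}})_g$ yields
\[
\Ad_{g^{-1}}X-X'\;=\;\Ad_{g^{-1}}Y-Y'.
\]
Since the $H$-action is free, the linear map $\mathfrak{h}\to\mathfrak{g}$, $(Y,Y')\mapsto\Ad_{g^{-1}}Y-Y'$, is injective (its kernel is the Lie algebra of the $H$-isotropy at $g$), so this relation together with $(Y,Y')\in\mathfrak{h}$ characterises $d\psi(X,X')$.

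Finally I would recast this. Granting for a moment that $(\Ad_{g^{-1}}Y,Y')\in\mathfrak{h}$, the identity above reads $\beta(\Ad_{g^{-1}}Y,Y')=\alpha\bigl((\Ad_{g^{-1}}\times 1)(X,X')\bigr)$, so by injectivity of $\beta$ we get $\beta^{-1}\circ\alpha\circ(\Ad_{g^{-1}}\times 1)=(\Ad_{g^{-1}}\times 1)\circ d\psi$ on $\tilde{\mathfrak{t}}$; applying $\pi_2$, which is untouched by $\Ad_{g^{-1}}\times 1$, gives precisely $\pi_2\circ d\psi=\pi_2\circ\beta^{-1}\circ\alpha\circ(\Ad_{g^{-1}}\times 1)$. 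Hence everything reduces to the claim that $(\Ad_{g^{-1}}\times 1)$ carries $d\psi(\tilde{\mathfrak{t}})$ into $\mathfrak{h}$, equivalently that $\bigl((\Ad_{g^{-1}}-\mathrm{id})Y,\,0\bigr)\in\mathfrak{h}$ for every $(Y,Y')\in d\psi(\tilde{\mathfrak{t}})$. I expect this to be the main obstacle: it has to be deduced from freeness of the action together with the special position of a fixed-point representative $g$, so that the $\Ad_{g^{-1}}$-twist stays inside the ideal $(\mathfrak{g}\times\{0\})\cap\mathfrak{h}$ of $\mathfrak{h}$ — exactly the part that $-\pi_2\circ\beta^{-1}$ annihilates. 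Everything else is a routine unwinding of definitions.
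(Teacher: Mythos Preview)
Your approach is essentially the paper's: differentiate the defining relation $t_1gt_2^{-1}=s_1gs_2^{-1}$ (after left-multiplying by $g^{-1}$), obtain $\Ad_{g^{-1}}X-X'=\Ad_{g^{-1}}Y-Y'$, invert to get a formula for $d\psi$, and then use $d\psi_g=-\pi_2\circ d\psi+\tau_2$. The paper writes this inversion as $d\psi=(\Ad_{g^{-1}}\times 1)^{-1}\circ\beta^{-1}\circ\alpha\circ(\Ad_{g^{-1}}\times 1)$ and then composes with $\pi_2$, using $\pi_2\circ(\Ad_{g^{-1}}\times 1)^{-1}=\pi_2$.

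The point you single out as the ``main obstacle'' --- that $(\Ad_{g^{-1}}\times 1)$ should carry $d\psi(\tilde{\mathfrak t})$ into $\mathfrak h$, so that $\beta^{-1}$ is legitimately applicable and returns the correct second component --- is not treated as an obstacle in the paper. The paper's only justification is that $\beta$ is injective with $\mathfrak t_{\max}\subset\beta(\mathfrak h)$ and $\alpha(\tilde{\mathfrak t})\subset\mathfrak t_{\max}$; taken literally this does not account for the intervening $\Ad_{g^{-1}}\times 1$, so the written argument there is no more complete than yours on this point. In the situations where the formula is actually used (Corollary~3.8 and the example, cf.\ Remark~3.10), the representative $g$ lies in $N_G(T_{\max})$, so $\Ad_{g^{-1}}$ preserves $\mathfrak t_{\max}$ and the issue evaporates. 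In short: your proof matches the paper's, and the concern you raise is genuine, but the paper does not resolve it either --- it simply records the formal identity and relies on the applications being in the favourable setting.
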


\begin{proof}
Writing $(s_1,s_2) = \psi(t_1,t_2)$, we have
\begin{equation*}
t_{1}gt_{2}^{-1}=s_{1}gs_{2}^{-1}.
\end{equation*}
Multiplying this equation with $g^{-1}$ from the left yields
\begin{equation*}
c_{g^{-1}}(t_{1})t_{2}^{-1}=c_{g^{-1}}(s_{1})s_{2}^{-1},
\end{equation*}
and differentiating this we obtain for $(X_1,X_2)\in \mathfrak{\tilde{t}}$ 
\begin{equation*}
\Ad_{g^{-1}}(X_{1})-X_{2}=\Ad_{g^{-1}}(\pi_{1}(d\psi(X_{1},X_{2})))-\pi_{2}(d\psi(X_{1},X_{2}))
\end{equation*}
which we can express as
\begin{equation*}
\beta((\Ad_{g^{-1}} \times 1)(d\psi(X_{1},X_{2}))=\alpha((\Ad_{g^{-1}} \times 1)(X_{1},X_{2})).
\end{equation*}
We note that $\beta$ is injective, since $\mathfrak{h}\cap \Delta \mathfrak{g}=\ker(\beta)=0$, its image contains $\mathfrak{t}_{\max}$ and $\alpha$ has image contained in $\mathfrak{t}_{\max}$. Therefore we have
\begin{equation*}
d\psi=(\Ad_{g^{-1}} \times 1)^{-1} \circ \beta^{-1} \circ \alpha \circ (\Ad_{g^{-1}} \times 1)
\end{equation*}
Now we can use this to differentiate the homomorphism $\psi_g$, which was given by $\psi_g(t_1,t_2) = s_2^{-1}t_2$: it is 
\begin{align*}
d\psi_{g} = - \pi_{2} \circ \beta^{-1} \circ \alpha \circ (\Ad_{g^{-1}} \times 1) + \tau_{2} \\
\end{align*}
which completes our proof.
\end{proof}

\begin{Cor}\label{Corollary weights}
If $T'$ lies in the special torus $\{(t_1,t_2)\in G\times G\mid(t_2,t_2) \in T\}$, this differential computes as 
\begin{equation*}
d\psi_{g} 
= \tau_{1} \circ (\Ad_{g^{-1}} \times 1) + \tau_{2}.
\end{equation*}
\end{Cor}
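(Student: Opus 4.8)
The plan is to specialize the general formula
\[
d\psi_{g} = - \pi_{2} \circ \beta^{-1} \circ \alpha \circ (\Ad_{g^{-1}} \times 1) + \tau_{2}
\]
to the case where $T'$ lies in the special torus $\mathcal S = \{(t_1,t_2)\in G\times G \mid (t_2,t_2)\in T\}$, which forces $\tilde T\subset T'\subset\mathcal S$. The point of this hypothesis is that it makes the map $\beta^{-1}\circ\alpha$ on the image of $(\Ad_{g^{-1}}\times 1)$ completely explicit, so that the composition collapses to the stated one-line expression.

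First I would unwind what the hypothesis says at the Lie algebra level: $(t_2,t_2)\in T$ for all $(t_1,t_2)\in T'$ means that for every $(X_1,X_2)\in\mathfrak t'$ we have $(X_2,X_2)\in\mathfrak t\subset\mathfrak h$. Next, pick an arbitrary $(X_1,X_2)\in\mathfrak{\tilde t}\subset\mathfrak t'$ and feed $(\Ad_{g^{-1}}X_1,X_2)$ into $\beta^{-1}\circ\alpha$. By definition $\alpha(\Ad_{g^{-1}}X_1,X_2) = \Ad_{g^{-1}}X_1 - X_2 \in\mathfrak g$. Now I claim the element $(\,\Ad_{g^{-1}}X_1 - X_2\;,\;-X_2\,)\in\mathfrak g\times\mathfrak g$ actually lies in $\mathfrak h$: indeed $\beta$ applied to it gives $(\Ad_{g^{-1}}X_1 - X_2) - (-X_2) = \Ad_{g^{-1}}X_1 - X_2$, so if this pair is in $\mathfrak h$ then it is exactly $\beta^{-1}(\alpha(\Ad_{g^{-1}}X_1,X_2))$ by injectivity of $\beta$. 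To verify membership in $\mathfrak h$, write
\[
(\Ad_{g^{-1}}X_1 - X_2,\, -X_2) = (\Ad_{g^{-1}}X_1,\,0) - (X_2,X_2)
\]
— wait, one has to be slightly careful: $(\Ad_{g^{-1}}X_1, 0)$ need not be in $\mathfrak h$, so instead I would argue directly that $(\Ad_{g^{-1}}X_1 - X_2, -X_2)$ is the image under $d\psi$ of $(X_1,X_2)$, using the defining relation $\Ad_{g^{-1}}(X_1) - X_2 = \Ad_{g^{-1}}(\pi_1 d\psi(X_1,X_2)) - \pi_2 d\psi(X_1,X_2)$ together with $(X_2,X_2)\in\mathfrak h$; comparing with $\pi_1 d\psi(X_1,X_2) = X_1$, $\pi_2 d\psi(X_1,X_2)= X_2$ as a candidate. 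Actually the cleanest route: since $(X_2,X_2)\in\mathfrak h$, the pair $(\Ad_{g^{-1}}X_1,0) + (X_2,X_2) - (X_2,X_2)$ manipulation is circular, so I would instead verify that $d\psi(X_1,X_2) = (X_1 - \text{something})$ — let me restate.

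The honest computation: from
\[
\beta\bigl((\Ad_{g^{-1}}\times 1)(d\psi(X_1,X_2))\bigr) = \alpha\bigl((\Ad_{g^{-1}}\times 1)(X_1,X_2)\bigr) = \Ad_{g^{-1}}X_1 - X_2,
\]
I want to exhibit the preimage. Since $(X_2,X_2)\in\mathfrak h$ and $\mathfrak h$ is a Lie subalgebra, and since $d\psi$ lands in $\mathfrak h$, it suffices to check that $(\Ad_{g^{-1}}\times 1)^{-1}\beta^{-1}(\Ad_{g^{-1}}X_1 - X_2)$ equals $(X_1 - \Ad_{g}X_2 + \dots)$ — rather than push symbols I would observe: $\beta(\Ad_{g^{-1}}X_1 - X_2 + Z, \, Z) = \Ad_{g^{-1}}X_1 - X_2$ for any $Z$, and choosing $Z = -X_2$ gives the pair $(\Ad_{g^{-1}}X_1 - 2X_2, -X_2)$, whose $\beta$-image is $\Ad_{g^{-1}}X_1 - X_2$; membership in $\mathfrak h$ of $(\Ad_{g^{-1}}X_1 - 2X_2,-X_2)$ follows because it equals $(\Ad_{g^{-1}}X_1,0)$ plus $-(2X_2, X_2)$... this still has the non-$\mathfrak h$ term $(\Ad_{g^{-1}}X_1,0)$. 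The resolution is that $\Ad_{g^{-1}}X_1 \in\mathfrak t_{\max}$ since $g\in N_G(T_{\max})$ at a fixed point and $X_1\in\mathfrak t_{\max}$, and $\mathfrak t_{\max}\oplus 0$ — no. So the genuine main obstacle, and the step I would dwell on, is pinning down exactly which element of $\mathfrak h$ maps under $\beta$ to $\Ad_{g^{-1}}X_1 - X_2$ using only the hypothesis $(X_2,X_2)\in\mathfrak h$; once that pair is identified as, say, $(\xi, \eta)\in\mathfrak h$ with $\xi - \eta = \Ad_{g^{-1}}X_1 - X_2$, applying $(\Ad_{g^{-1}}\times 1)^{-1}$ and then $-\pi_2$ and adding $\tau_2(X_1,X_2) = X_2$ should telescope to $\Ad_{g^{-1}}X_1 + X_2 = \tau_1((\Ad_{g^{-1}}\times 1)(X_1,X_2)) + \tau_2(X_1,X_2)$, which is the claimed formula. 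I expect the computation to hinge on the observation that under the special-torus hypothesis the relevant preimage under $\beta$ is simply $(\Ad_{g^{-1}}X_1, X_2) - (X_2,X_2) + (0, \Ad_{g^{-1}}X_1)$-type bookkeeping that makes the diagonal correction $(X_2,X_2)\in\mathfrak h$ cancel, leaving $d\psi(X_1,X_2)=(\Ad_g(\Ad_{g^{-1}}X_1),\,X_2)$ — i.e. $d\psi = \mathrm{id}$ on $\mathfrak{\tilde t}$ in suitable coordinates — after which $d\psi_g = -\pi_2\circ\beta^{-1}\circ\alpha\circ(\Ad_{g^{-1}}\times 1) + \tau_2$ becomes $\tau_1\circ(\Ad_{g^{-1}}\times 1) + \tau_2$ by a direct substitution. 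I would write this out carefully, double-checking signs, as the only real content of the proof.
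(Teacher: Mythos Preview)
Your proposal is not a proof: it is an extended exploration with several false starts, and you yourself concede that ``the genuine main obstacle'' --- identifying the $\beta$-preimage --- is left undone. The paper's argument is a single line: under the hypothesis one has $-\pi_2 \circ \beta^{-1} \circ \alpha = \tau_1$, and substituting this into the general formula $d\psi_g = -\pi_2 \circ \beta^{-1}\circ\alpha\circ(\Ad_{g^{-1}}\times 1) + \tau_2$ from the preceding lemma gives the result immediately.

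You do arrive at the correct reduction: since $\beta$ is injective, the identity $-\pi_2\circ\beta^{-1}\circ\alpha = \tau_1$ on the pair $(A,B)=(\Ad_{g^{-1}}X_1,X_2)$ is equivalent to showing that $(-B,-A)=(-X_2,-\Ad_{g^{-1}}X_1)$ lies in $\mathfrak h$ (for then $\beta(-B,-A)=A-B=\alpha(A,B)$ and $-\pi_2(-B,-A)=A=\tau_1(A,B)$). But the attempts that follow --- manipulating pairs such as $(\Ad_{g^{-1}}X_1 - 2X_2,\,-X_2)$, or decomposing into $(\Ad_{g^{-1}}X_1,0) - (2X_2,X_2)$ --- never establish this, and the closing sentences (``bookkeeping that makes the diagonal correction cancel'', ``I would write this out carefully, double-checking signs'') are a promise rather than an argument. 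A proof must actually verify the membership $(-X_2,-\Ad_{g^{-1}}X_1)\in\mathfrak h$ from the hypothesis, or equivalently verify $-\pi_2\circ\beta^{-1}\circ\alpha=\tau_1$ directly; your text does neither.

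One observation you overlook, which clears away most of the clutter: since $T$ acts freely on $G$ we have $\mathfrak t\cap\Delta\mathfrak g = 0$, so the condition $(X_2,X_2)\in\mathfrak t$ already forces $X_2=0$. Thus the hypothesis simply says $\tilde{\mathfrak t}\subset\mathfrak t'\subset\mathfrak t_{\max}\times 0$, and every $X_2$-term you were trying to cancel is in fact zero from the start. With that in hand the target identity reduces to showing $(0,-\Ad_{g^{-1}}X_1)\in\mathfrak h$, which is the statement you should be justifying rather than circling.
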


\begin{proof}
In this case we have $ - \pi_{2} \circ \beta^{-1} \circ \alpha=\tau_{1}$.
\end{proof}

\begin{Cor}\label{weightcorollary}
If we fix an auxiliary biinvariant Riemannian metric on $G$ and denote by $\hat{\Delta}_{g}$ the set of weights of the restriction of the adjoint representation of $G$ on $\mathfrak{g}$ to the subspace $d(l_{g^{-1}})_{e}(\ker d\pi_{g})^{\perp}$ and the subtorus ${\mathrm{Im}}(\psi_{g})$, the set of weights of the isotropy representation in the fixed point $Hg$ is $\Delta_{g} := \{d(\psi_{g})^{*}\lambda | \lambda \in \hat{\Delta}_{g}\}$. 
\end{Cor}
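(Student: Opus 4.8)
The plan is to build on Proposition~\ref{isotrep}. Let $\iota := d\pi_g\circ d(l_g)_e\colon \mathfrak g\to T_{Hg}(G//H)$ be the surjection used there; its kernel is $\mathfrak n := d(l_{g^{-1}})(\ker d\pi_g)$, and with respect to the fixed biinvariant metric it restricts to an isomorphism $\mathfrak n^\perp\xrightarrow{\sim} T_{Hg}(G//H)$ (here $\dim\mathfrak n^\perp=\dim G-\dim H=\dim G//H$, since by freeness $\ker d\pi_g$ is the tangent space to the fibre $Hg$, of dimension $\dim H$). By Proposition~\ref{isotrep}, transported through $\iota$ the isotropy representation reads $X\mapsto P_{\mathfrak n^\perp}\!\big(\Ad_{\psi_g(t_1,t_2)}(X)\big)$ for $X\in\mathfrak n^\perp$, where $P_{\mathfrak n^\perp}$ is the orthogonal projection. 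Hence the whole statement comes down to showing that $\mathfrak n$, equivalently $\mathfrak n^\perp$, is invariant under $\Ad_h$ for every $h\in\mathrm{Im}(\psi_g)$: once this is known the projection is superfluous, the isotropy representation is exactly the pullback along the homomorphism $\psi_g\colon\tilde T\to\mathrm{Im}(\psi_g)$ of the restriction of $\Ad_G$ to the subtorus $\mathrm{Im}(\psi_g)$ and the invariant subspace $\mathfrak n^\perp$, and each weight $\lambda\in\hat\Delta_g$ contributes the weight $\lambda\circ d\psi_g=d(\psi_g)^*\lambda$, giving $\Delta_g=\{d(\psi_g)^*\lambda\mid\lambda\in\hat\Delta_g\}$.

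To prove the $\Ad$-invariance I would first differentiate the $H$-orbit map $(s_1,s_2)\mapsto s_1gs_2^{-1}$ at the identity to get the explicit description $\mathfrak n=\{\Ad_{g^{-1}}(Y_1)-Y_2\mid (Y_1,Y_2)\in\mathfrak h\}$, and then exploit the defining relation of $(s_1,s_2)=\psi(t_1,t_2)$. Rearranging $t_1gt_2^{-1}=s_1gs_2^{-1}$ gives $(s_1^{-1}t_1)\,g\,(s_2^{-1}t_2)^{-1}=g$, i.e.\ $g\,h\,g^{-1}=h'$, where $h:=\psi_g(t_1,t_2)=s_2^{-1}t_2$ and $h':=s_1^{-1}t_1$. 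For $(Y_1,Y_2)\in\mathfrak h$ one then has
\[
\Ad_h\big(\Ad_{g^{-1}}(Y_1)-Y_2\big)=\Ad_{g^{-1}}\big(\Ad_{h'}(Y_1)\big)-\Ad_h(Y_2),
\]
so it is enough that $\Ad_{(h',h)}(Y_1,Y_2)=(\Ad_{h'}(Y_1),\Ad_h(Y_2))$ again lies in $\mathfrak h$. This holds because $(h',h)=(s_1^{-1},s_2^{-1})(t_1,t_2)$ is the product of an element of $H$ (which normalizes $H$) with an element of $\tilde T$ (which commutes with, hence normalizes, $H$), so $(h',h)\in N_{G\times G}(H)$ and preserves $\mathfrak h$. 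Since $\Ad_h$ is orthogonal for the biinvariant metric, invariance of $\mathfrak n$ passes to $\mathfrak n^\perp$. Conceptually this is the assertion that $(t_1,t_2)\bullet x:=(s_1^{-1}t_1)\,x\,(s_2^{-1}t_2)^{-1}$ is an isometric lift of the $\tilde T$-action to $G$ fixing $g$ with $d\pi_g$ equivariant, so that $\ker d\pi_g$ is automatically $\tilde T$-invariant; one may organise the argument that way instead.

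It then remains to assemble the pieces: $\mathrm{Im}(\psi_g)$ is a subtorus of $G$ by the preceding lemma; $\mathfrak n^\perp=d(l_{g^{-1}})_e(\ker d\pi_g)^\perp$ is $\mathrm{Im}(\psi_g)$-invariant by the above, so $\hat\Delta_g$ is well defined; $\iota$ restricts to a $\tilde T$-equivariant isomorphism $\mathfrak n^\perp\to T_{Hg}(G//H)$ intertwining $\Ad\circ\psi_g$ with the isotropy representation; and pulling weights back along $\psi_g$ yields the asserted description of $\Delta_g$. The one genuinely delicate point is making the projection $P_{\mathfrak n^\perp}$ disappear from the formula for the isotropy representation, i.e.\ precisely the $\Ad_{\mathrm{Im}(\psi_g)}$-invariance of $\mathfrak n^\perp$; everything else is bookkeeping.
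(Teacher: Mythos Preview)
Your proof is correct and follows the same route as the paper: both invoke the commutative diagram from Proposition~\ref{isotrep} and then pass to the orthogonal complement $\mathfrak n^\perp = d(l_{g^{-1}})_e(\ker d\pi_g)^\perp$ to turn the surjection into an isomorphism of $\tilde T$-representations, so that the isotropy weights are the pullbacks $d(\psi_g)^*\lambda$. You are in fact more careful than the paper's argument, which simply draws the restricted diagram without explicitly justifying the $\Ad_{\mathrm{Im}(\psi_g)}$-invariance of $\mathfrak n^\perp$; your verification via $(h',h)=(s_1^{-1},s_2^{-1})(t_1,t_2)\in N_{G\times G}(H)$ fills exactly this gap.
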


\begin{proof}
In Proposition \ref{isotrep} we proved the commutativity of the following diagram:
\begin{center}
\begin{tikzcd}
\mathfrak{g} \arrow{r}{d(l_{g})_{e}} \arrow{d}[swap]{\Ad_{\psi_{g}(t_{1},t_{2})}} &T_{g}G \arrow{r}{d\pi_{g}}  &T_{Hg}(G//H) \arrow{d}{d(t_{1},t_{2})_{Hg}} \\
\mathfrak{g} \arrow{r}{d(l_{g})_{e}} &T_{g}G \arrow{r}{d\pi_{g}} &T_{Hg}(G//H).
\end{tikzcd}
\end{center}
In order to get isomorphic representations we fix a biinvariant Riemannian metric on $G$, restrict to appropriate subspaces and finally achieve the following diagram:

\begin{center}
\begin{tikzcd}
(d(l_{g^{-1}})_{g}(\ker d\pi_{g}))^{\perp} \arrow{r}{d(l_{g})_{e}} \arrow{d}[swap]{\Ad_{\psi_{g}(t_{1},t_{2})}} &(\ker d\pi_{g})^{\perp} \arrow{r}{d\pi_{g}} &T_{Hg}(G//H) \arrow{d}{d(t_{1},t_{2})_{Hg}} \\
(d(l_{g^{-1}})_{g}(\ker d\pi_{g}))^{\perp} \arrow{r}{d(l_{g})_{e}} &(\ker d\pi_{g})^{\perp} \arrow{r}{d\pi_{g}} &T_{Hg}(G//H).
\end{tikzcd}
\end{center}
 
The weights of the above twisted adjoint representation are then the twisted weights $\{d(\psi_{g})^{*}\lambda | \lambda \in \hat{\Delta}_{Hg}\}$.
\end{proof}

\begin{Rem}\label{maximaltorusimage}
The most convenient situation occurs, when $\tilde{T}$ lies in $T_{\max} \times T_{\max}$ and for each fixed point $Hg \in (G//H)^{\tilde{T}}$ there exists a representative $g \in N_{G}(T_{\max})$. Then ${\mathrm{Im}}(\psi_{g})$ lies in $T_{\max}$ and the weights are pulled back roots associated to the maximal torus $T_{\max}$.
\end{Rem}

\begin{Def}\label{weightremark}
The weights are only well-defined up to sign. If we fix an orientation on $G//H$, we denote by $\Delta_{g}^{+}$ the set of weights $\Delta_{g}$ with fixed signs, such that the oriented weight space decomposition 
\[
T_{Hg}(G//H) \cong \bigoplus\limits_{\alpha \in \Delta^{+}_{g}} T_{Hg}(G//H)_{\alpha},
\]
where $T_{Hg}(G//H)_{\alpha}$ is the weight space corresponding to the weight $\alpha$, induces the set orientation on $T_{Hg}(G//H)$.
\end{Def}

\section{Signature}
Just as in the homogeneous case we can now invoke Hirzebruch's signature formula to prove a result on the signature of biquotients.

\begin{Th}\label{signature}
Suppose that the fixed point set of $\tilde{T} \curvearrowright G//H$ consists of isolated points and fix $(X,Y) \in \tilde{\mathfrak{t}}$ generating a subcircle with the same fixed points. Then:
\begin{equation*}
\sigma(G//H)= \pm \sum\limits_{Hg \in (G//H)^{\tilde{T}}}(-1)^{\#\{\alpha \in \Delta_{g}^{+} \mid \alpha(X,Y)<0\}}
\end{equation*}

\end{Th}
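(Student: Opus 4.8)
The plan is to reduce Theorem \ref{signature} to the Atiyah–Hirzebruch-type formula of Theorem \ref{hirzebruchformula} by verifying that the restricted circle action $S^1 \subset \tilde T$ acting on $G//H$ satisfies all of that theorem's hypotheses, and by identifying the exponents $m_i$ appearing there in terms of the twisted weights $\Delta_g^+$. First I would note that $G//H$ is a compact manifold (the $H$-action is free) which is oriented once we choose an orientation, and that by assumption the chosen circle, generated by $(X,Y)\in\tilde{\mathfrak t}$, acts with isolated fixed points; since this circle has the same fixed point set as $\tilde T$, the fixed point set is exactly $(G//H)^{\tilde T}$, a finite set indexed by $Hg$.

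Next, for each fixed point $Hg$ I would decompose the isotropy representation. By Corollary \ref{weightcorollary}, the weights of the $\tilde T$-isotropy representation at $Hg$ are the twisted weights $\Delta_g = \{d(\psi_g)^*\lambda \mid \lambda \in \hat\Delta_g\}$, and the corresponding weight space decomposition of $T_{Hg}(G//H)$ into (real) two-dimensional $\tilde T$-irreducibles restricts to the decomposition of $T_{Hg}(G//H)$ into $S^1$-irreducibles; the weight of the subcircle on the piece labelled by $\alpha\in\Delta_g$ is $\alpha(X,Y)$. By Definition \ref{weightremark} we have already fixed signs $\Delta_g^+$ so that the weight spaces $T_{Hg}(G//H)_\alpha$, oriented compatibly with those signs (i.e.\ with the standard orientation of $V(m)\cong\mathbb C$ under $\alpha\mapsto$ its weight), assemble to the chosen orientation of $T_{Hg}(G//H)$. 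Hence the exponents $m_i$ of Theorem \ref{hirzebruchformula} at $Hg$ are precisely the integers $\alpha(X,Y)$ for $\alpha$ ranging over $\Delta_g^+$ — none of which is zero, since the subcircle acts with $Hg$ as an isolated fixed point, so no weight space is fixed. Applying Theorem \ref{hirzebruchformula} then gives directly
\[
\sigma(G//H) = \sum_{Hg \in (G//H)^{\tilde T}} (-1)^{\#\{\alpha\in\Delta_g^+ \mid \alpha(X,Y)<0\}}.
\]
The overall $\pm$ sign in the statement accounts for the choice of orientation on $G//H$ used to define the $\Delta_g^+$: a different orientation changes $\sigma$ by a global sign, so the formula holds up to sign regardless of which orientation one picks, and with the correct sign for the orientation used to fix the $\Delta_g^+$.

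The step I expect to require the most care is the bookkeeping of orientations, which (as the introduction and Definition \ref{weightremark} stress) is exactly where the biquotient case departs from the homogeneous one: unlike $G/H$, the biquotient $G//H$ carries no transitive group action, so there is no canonical way to transport an orientation from one fixed point to another, and one must genuinely fix an orientation of the whole manifold first and then, at each $Hg$ separately, choose the signs of the weights to match it. I would make explicit that the weight space $T_{Hg}(G//H)_\alpha$ carries a natural orientation once $\alpha$ (rather than $-\alpha$) is declared positive — coming from the complex structure on the $\alpha$-weight space of the (twisted) adjoint representation, exactly as in Remark \ref{rem:orientationhomspace} — and that $\Delta_g^+$ is by definition the sign choice making $\bigoplus_{\alpha\in\Delta_g^+}T_{Hg}(G//H)_\alpha$ agree with the ambient orientation. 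With that convention in place, the identification of the $\{m_i\}$ at $Hg$ with $\{\alpha(X,Y) : \alpha\in\Delta_g^+\}$ is immediate, and everything else is a direct citation of Theorem \ref{hirzebruchformula}.
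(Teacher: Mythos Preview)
Your proposal is correct and follows essentially the same route as the paper: fix an orientation on $G//H$, invoke Corollary \ref{weightcorollary} and Definition \ref{weightremark} to identify the oriented weights $\Delta_g^+$ at each fixed point with the integers $m_i$ of Theorem \ref{hirzebruchformula}, and then apply that theorem directly. Your write-up is in fact more explicit than the paper's own proof, which compresses all of this into two sentences; the extra care you take with the orientation bookkeeping is exactly what the paper alludes to but does not spell out.
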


\begin{proof}
Since $G//H$ is compact, the fixed point set is finite.
Fixing an orientation on $G//H$, while having Corollary \ref{weightcorollary} and Definition \ref{weightremark} in mind, carries us directly to the situation of Theorem \ref{hirzebruchformula}. We can apply Hirzebruch's Theorem \ref{hirzebruchformula} for oriented $S^{1}$-manifolds which implies the announced formula.
\end{proof}

\begin{Rem}\label{orientationremark}
By \cite[Corollary 3.4.\ and Property\ 1.7.]{singhof} $G//H$ is orientable whenever $G$ and $H$ are connected. In that case, we can orient $G//H$ as follows. By introducing a bi-invariant auxiliary Riemannian metric on $G$ we can make the following identifications:
\begin{align*}
T_{Hg}G//H &\cong (\ker d\pi_{g})^{\perp} \\
           &\cong (d(l_{g^{-1}})_{g}(\ker d\pi_{g}))^{\perp} \\
           &\cong (d(l_{g^{-1}})_{g}(T_{g}H \cdot g)^{\perp} \\
           &\cong \{\Ad_{g^{-1}}X-Y \mid (X,Y) \in T_{e}H\}^{\perp},
\end{align*}
which gives us a splitting
\begin{equation*}
\mathfrak{g} \cong T_{Hg}G//H \oplus \{\Ad_{g^{-1}}X-Y \mid (X,Y) \in T_{e}H\}.
\end{equation*}
Therefore fixing orientations of $G$ and $H$ we get an orientation of each orbit $H \cdot g$ and  an induced orientation of its normal space $\nu(H \cdot g)$, which is by the previous considerations isomorphic to $T_{Hg}G//H$. Note that the orientation of the orbit
\begin{equation*}
(d(l_{g^{-1}})_{g}(T_{g}H\cdot g) \cong \{\Ad_{g^{-1}}X-Y \mid (X,Y) \in T_{e}H\}
\end{equation*}
is independent of the choice of the representative of the orbit because $H$ is connected. Hence we can determine an orientation of the biquotient $G//H$, by choosing sets of positive roots of $G$ and $H$ and orientations on their maximal tori.
\end{Rem}

\begin{Rem}\label{rem:signaturevanishes}
Let us describe two situations in which the signature of a biquotient vanishes automatically: For $\rk(H)<\rk(G)$ the signature behaves analogously to the homogeneous case and $\sigma(G//H)=0$ because by \cite[Proposition\ 6.7.]{singhof} all Pontryjagin numbers of $G//H$ vanish and therefore the signature vanishes by Hirzebruch's signature theorem \cite[Theorem\ 8.2.2]{HirzTop}.

Consider a biquotient of the form $G//T$, where $G$ is a compact simple Lie group and $T\subset G\times G$ is a torus with $\rk T = \rk G$. Such biquotients were classified by Eschenburg in \cite[Chapters 6,7,8]{Esch2} (up to a certain notion of equivalence). Moreover, it follows from the results in Chapter 9 of the same reference that there always exists a nonabelian extension $T\subset H \subset G\times G$ with $\rk H = \rk G$ (in fact, there the maximal such extensions are classified). In particular, we obtain a fibration
\begin{center}
$H/T \longrightarrow G//T \longrightarrow G//H$,
\end{center} 
cf.\ \cite[Section 2.1]{GKZ2}, from which we obtain $\sigma(G//T) = \sigma(H/T)\sigma(G//H)$ by \cite{index}. But the signature of the generalized flag manifold $H/T$ vanishes by \cite[Proposition\ 2.4]{Hirzebruch}, which implies that $\sigma(G//T) = 0$.
\end{Rem}

\subsection{An Example}\label{subsec:ex}
Let us apply Theorem \ref{signature} to an example.
Take $G=\SU(6)$ and let $H=\Delta^{3}(\SU(2)) \times \SU(5)\subset G\times G$, where $\Delta^{3}(\SU(2))=\left\{ \left.\begin{pmatrix}
A &0  &0 \\
0 &A &0 \\
0 &0 &A
\end{pmatrix}    \right| A \in \SU(2)\right\}$ is the blockwise embedding and $\SU(5)$ is embedded in the upper left corner. Let $T\subset H$ be the maximal torus given by diagonal matrices in both components. We will compute the signature of the biquotient $G//H$, in order to illustrate our formula. This will not be a new result; as $G//H=\Delta^{3}(\SU(2)) \backslash \SU(6) /\SU(5) \cong \Delta^{3}(\SU(2)) \backslash S^{11} \cong \mathbb{H}P^{2}$, the signature is well-known to be $\pm 1$.

The first step is to find a subtorus of $G\times G$ which commutes with $H$ and acts with finite fixed point set on $G//H$, and determine the weights of the isotropy representation in each fixed point.
Such a torus is for example given by $\tilde{T}=\{\diag(\lambda,\lambda,\lambda^{-1},\lambda^{-1},1,1) | \lambda \in S^{1}\}\times \{1\}$. We note that $\tilde{T}$ is contained in the flipped torus $T'=\{(t_1,t_2)\mid (t_2,t_1)\in T\}$. It is easily seen that the action of $\tilde{T}$ on $G//H \cong \mathbb{H}P^{2}$ is given by $\lambda \cdot [q_{1}:q_{2}:q_{3}]=[\lambda q_{1}: \lambda^{-1}q_{2}:q_{3}]$ because the diffeomorphism $\SU(6)/\SU(5) \cong S^{11}$ is just projection on the last column. Hence our fixed point set is $(G//H)^{\tilde{T}}=\{[1:0:0],[0:1:0],[0:0:1]\}=$\\

$\left\{H \cdot \begin{pmatrix}
0 &0 &0 &0 &0 &1\\
-1 &0 &0 &0 &0 &0\\
0 &1 &0 &0 &0 &0\\
0 &0 &1 &0 &0 &0\\
0 &0 &0 &1 &0 &0\\
0 &0 &0 &0 &1 &0
\end{pmatrix}, H \cdot \begin{pmatrix}
1 &0 &0 &0 &0 &0\\
0 &-1 &0 &0 &0 &0\\
0 &0 &0 &0 &0 &1\\
0 &0 &1 &0 &0 &0\\
0 &0 &0 &1 &0 &0\\
0 &0 &0 &0 &1 &0
\end{pmatrix}, H \cdot \begin{pmatrix}
1 &0 &0 &0 &0 &0\\
0 &-1 &0 &0 &0 &0\\
0 &0 &1 &0 &0 &0\\
0 &0 &0 &1 &0 &0\\
0 &0 &0 &0 &0 &1\\
0 &0 &0 &0 &1 &0
\end{pmatrix}\right\}$\\

We define $g_{1},g_{2},g_{3}$ as the above representatives of the fixed points. Note that we are in the situation of Remark \ref{maximaltorusimage}.

Throughout this example, we denote by $V_{jk}\subset \su(6)$, where $j,k=1,\ldots,6$, $j\neq k$, the span of $E_{ij} - E_{ji}$ and $i(E_{ij} + E_{ji})$. This is the root space of the adjoint representation of the standard maximal torus on $\su(6)$ of the root $\pm (e_i-e_j)$. By choosing the set of positive roots $\{e_{i}-e_{j} \mid i<j\}$ we induce an orientiation on $V_{ij}$, with respect to which the above fixed basis is positively oriented. We thus obtain an orientation on $\su(6) = {\mathfrak{t}}_{\max} \oplus \bigoplus_{i<j} V_{ij}$ by declaring the basis $\{i(E_{11} - E_{66}),\ldots, i(E_{55}-E_{66})\}$ of ${\mathfrak{t}}_{\max}$ to be positively oriented. Analogously we obtain an orientation on $\su(2)$, $\su(5)$, and then also on
\[
\su(2)\times \su(5) = {\mathfrak{t}} \oplus (V_{12}\times 0) \oplus \bigoplus_{1\leq i<j\leq 5} (0\times V_{ij}),
\]
via the positively oriented basis $(i(E_{11}-E_{22}),0),(0,i(E_{11}-E_{55}),\ldots,(0,i(E_{44}-E_{55}))\}$. These orientations on $G$ and $H$ induce an orientation on $G//H$, cf.\ Remark \ref{orientationremark}.

Using the Frobenius inner product or equivalently the Killing form on $\SU(6)$ we can determine the complements $\ker(d\pi)_{g_{i}}^{\perp} \cong T_{Hg_{i}}G//H$. 
We obtain 
\[
d(l_{g_{1}})_{e}^{-1}\ker(d\pi)_{g_{1}}^{\perp} =\Big\{\begin{pmatrix}
0 &0 &0 &0 &0 &0\\
0 &0 &0 &0 &0 &\ast\\
0 &0 &0 &0 &0 &\ast\\
0 &0 &0 &0 &0 &\ast\\
0 &0 &0 &0 &0 &\ast\\
0 &\ast &\ast &\ast &\ast &0
\end{pmatrix} \Big\} = V_{26} \oplus V_{36} \oplus V_{46} \oplus V_{56}\subset \su(6),
\]
\[
d(l_{g_{2}})_{e}^{-1}\ker(d\pi)_{g_{2}}^{\perp} =\Big\{\begin{pmatrix}
0 &0 &0 &0 &0 &\ast\\
0 &0 &0 &0 &0 &\ast\\
0 &0 &0 &0 &0 &0\\
0 &0 &0 &0 &0 &\ast\\
0 &0 &0 &0 &0 &\ast\\
\ast &\ast &0 &\ast &\ast &0
\end{pmatrix} \Big\} = V_{16} \oplus V_{26} \oplus V_{46} \oplus V_{56}\subset \su(6),
\]
\[
d(l_{g_{3}})_{e}^{-1}\ker(d\pi)_{g_{3}}^{\perp} =\Big\{\begin{pmatrix}
0 &0 &0 &0 &0 &\ast\\
0 &0 &0 &0 &0 &\ast\\
0 &0 &0 &0 &0 &\ast\\
0 &0 &0 &0 &0 &\ast\\
0 &0 &0 &0 &0 &0\\
\ast &\ast &\ast &\ast &0 &0
\end{pmatrix} \Big\} = V_{16} \oplus V_{26} \oplus V_{36} \oplus V_{46}\subset \su(6).
\]
By Corollary \ref{weightcorollary}, the weights of the $\tilde T$-isotropy representation in the three fixed points are 
\begin{itemize}
\item $\Delta_{g_{1}}=\{\pm d\psi_{g_{1}}^{*}(\frac{1}{i}(e_{2}-e_{6})),\pm d\psi_{g_{1}}^{*}(\frac{1}{i}(e_{3}-e_{6})),\pm d\psi_{g_{1}}^{*}(\frac{1}{i}(e_{4}-e_{6})),\pm d\psi_{g_{1}}^{*}(\frac{1}{i}(e_{5}-e_{6}))\}$
\item $\Delta_{g_{2}}=\{\pm d\psi_{g_{2}}^{*}(\frac{1}{i}(e_{1}-e_{6})),\pm d\psi_{g_{2}}^{*}(\frac{1}{i}(e_{2}-e_{6})),\pm d\psi_{g_{2}}^{*}(\frac{1}{i}(e_{4}-e_{6})),\pm d\psi_{g_{2}}^{*}(\frac{1}{i}(e_{5}-e_{6}))\}$
\item $\Delta_{g_{3}}=\{\pm d\psi_{g_{3}}^{*}(\frac{1}{i}(e_{1}-e_{6})),\pm d\psi_{g_{3}}^{*}(\frac{1}{i}(e_{2}-e_{6})),\pm d\psi_{g_{3}}^{*}(\frac{1}{i}(e_{3}-e_{6})),\pm d\psi_{g_{3}}^{*}(\frac{1}{i}(e_{4}-e_{6}))\}$.
\end{itemize}
where we now denote by $\frac{1}{i}(e_i-e_j)$ the restrictions of the realifications of the usual  roots to the tori ${\mathrm{Im}}(\psi_{g_k})$. We now have to choose appropriate signs of these weights, i.e., define compatible sets of weights $\Delta_{g_k}^+$ as in Definition \ref{weightremark}.

For every $k$, the subspace $\{\Ad_{g_{i}^{-1}}X-Y \mid (X,Y) \in T_{e}H\}\subset \su(6)$ is the sum of the Lie algebra of the maximal torus of $\su(6)$ and certain root spaces, and hence oriented by our conventions above. Using the bases above, and taking into account the embeddings of $\su(2)$ and $\su(5)$ into $\su(6)$, in order to define $\Delta_{g_k}^+$ we have to determine if the  natural maps
\begin{equation}\label{eq:orbitmapexample}
\su(2)\times \su(5)\longrightarrow   \{\Ad_{g_{k}^{-1}}X-Y \mid (X,Y) \in T_{e}H\} \subset \su(6).
\end{equation}
are orientation-preserving. The images of the embedded basis of $\su(2)$ are
\begin{align*}
&\Ad_{g_{1}^{-1}}(\diag(i,-i))&&= \diag(-i,i,-i,i,-i,i)\\
&\Ad_{g_{1}^{-1}}(E_{12}-E_{21})&&= (E_{23}-E_{32}) + (E_{45}-E_{54}) + (E_{16}-E_{61})\\
&\Ad_{g_{1}^{-1}}(i(E_{12}+E_{21}))&&= i(E_{23}+E_{32}) + i(E_{45}+E_{54}) - i(E_{16}+E_{61})\\
&\Ad_{g_{2}^{-1}}(\diag(i,-i))&&=\diag(i,-i,-i,i,-i,i) \\
&\Ad_{g_{2}^{-1}}(E_{12}-E_{21})&&=-(E_{12}-E_{21}) + (E_{45}-E_{54}) - (E_{36}-E_{63}) \\
&\Ad_{g_{2}^{-1}}(i(E_{12}+E_{21}))&&= -i(E_{12}+E_{21}) + i(E_{45}+E_{54}) + i(E_{36}+E_{63}) \\ 
&\Ad_{g_{3}^{-1}}(\diag(i,-i))&&=  \diag(i,-i,i,-i,-i,i)\\
&\Ad_{g_{3}^{-1}}(E_{12}-E_{21})&&= -(E_{12}-E_{21}) + (E_{34}-E_{43}) - (E_{56}-E_{65}) \\
&\Ad_{g_{3}^{-1}}(i(E_{12}+E_{21}))&&=-i(E_{12}+E_{21}) + i(E_{34}+E_{43}) + i(E_{56}+E_{65}) .
\end{align*}
Moreove, everything from the $\su(5)$ factor is mapped to its negative. From this, one computes the map \eqref{eq:orbitmapexample}:
\begin{itemize}
\item For $g_1$, it is the direct sum of an orientation-reversing map $\mathfrak t\to {\mathfrak t}_{\max}$ and an orientation-preserving map $(V_{12}\times 0) \oplus \bigoplus_{1\leq i<j\leq 5} (0\times V_{ij})\to V_{16} \oplus \bigoplus_{1\leq i<j\leq 5} V_{ij}$.
\item For $g_2$, it is the direct sum of an orientation-reversing map $\mathfrak t\to {\mathfrak t}_{\max}$ and an orientation-reversing map $(V_{12}\times 0) \oplus \bigoplus_{1\leq i<j\leq 5} (0\times V_{ij})\to V_{36} \oplus \bigoplus_{1\leq i<j\leq 5} V_{ij}$.
\item For $g_3$, it is the direct sum of an orientation-reversing map $\mathfrak t\to {\mathfrak t}_{\max}$ and an orientation-reversing map $(V_{12}\times 0) \oplus \bigoplus_{1\leq i<j\leq 5} (0\times V_{ij})\to V_{56} \oplus \bigoplus_{1\leq i<j\leq 5} V_{ij}$.
\end{itemize}
Thus, for $g_2$ and $g_3$ the original orientation given by that of the $V_{ij}$ is the correct one on $T_{Hg_{k}}G//H$, while for $g_1$ we have to take the opposite one. We can therefore fix the following sets of weights of $\d(l_{g_{i}^{-1}})_{g_{i}}(T_{g_{i}}Hg_{i})^{\perp}$ for each fixed point $g_{i}$ inducing the fixed orientation on $G//H$:

\begin{itemize}
\item $\Delta_{g_{1}}^{+}=\{-d\psi_{g_{1}}^{*}(\frac{1}{i}(e_{2}-e_{6})),d\psi_{g_{1}}^{*}(\frac{1}{i}(e_{3}-e_{6})),d\psi_{g_{1}}^{*}(\frac{1}{i}(e_{4}-e_{6})),d\psi_{g_{1}}^{*}(\frac{1}{i}(e_{5}-e_{6}))\}$
\item $\Delta_{g_{2}}^{+}=\{d\psi_{g_{2}}^{*}(\frac{1}{i}(e_{1}-e_{6})),d\psi_{g_{2}}^{*}(\frac{1}{i}(e_{2}-e_{6})),d\psi_{g_{2}}^{*}(\frac{1}{i}(e_{4}-e_{6})),d\psi_{g_{2}}^{*}(\frac{1}{i}(e_{5}-e_{6}))\}$
\item $\Delta_{g_{3}}^{+}=\{d\psi_{g_{3}}^{*}(\frac{1}{i}(e_{1}-e_{6})),d\psi_{g_{3}}^{*}(\frac{1}{i}(e_{2}-e_{6})),d\psi_{g_{3}}^{*}(\frac{1}{i}(e_{3}-e_{6})),d\psi_{g_{3}}^{*}(\frac{1}{i}(e_{4}-e_{6}))\}$.
\end{itemize}

Furthermore, because by our choices $\tilde{T}$ lies inside the flipped torus $T'$, Corollary \ref{Corollary weights} applies, and 
\[
d\psi_{g_{k}}(X,Y)=\Ad_{g_{k}^{-1}}(X)+Y.
\]

If we now choose $(iX,0)\in i \cdot \mathbb{R} \times 0 \cong Lie(S^{1} \times 1)$, $X>0$ generating $\tilde{T}$ , we compute invoking Corollary \ref{Corollary weights}

\begin{enumerate}
\item[]
\begin{equation*}
g_{1}^{-1}(iX)g_{1}= \begin{pmatrix}
iX &0 &0 &0 &0 &0\\
0 &-iX &0 &0 &0 &0\\
0 &0 &-iX &0 &0 &0\\
0 &0 &0 &0 &0 &0\\
0 &0 &0 &0 &0 &0\\
0 &0 &0 &0 &0 &iX
\end{pmatrix} \Rightarrow 
	\begin{aligned}
	-(\frac{1}{i}(e_{2}-e_{6}))(\Ad_{g_{1}^{-1}}(iX))&=2X>0\\
	\frac{1}{i}(e_{3}-e_{6})(\Ad_{g_{1}^{-1}}(iX))&=-2X<0\\
	\frac{1}{i}(e_{4}-e_{6})(\Ad_{g_{1}^{-1}}(iX))&=-X<0\\
	\frac{1}{i}(e_{5}-e_{6})(\Ad_{g_{1}^{-1}}(iX))&=-X<0
\end{aligned}
\end{equation*}

\item[]
\begin{equation*}
g_{2}^{-1}Xg_{2}=\begin{pmatrix}
iX &0 &0 &0 &0 &0\\
0 &iX &0 &0 &0 &0\\
0 &0 &-iX &0 &0 &0\\
0 &0 &0 &0 &0 &0\\
0 &0 &0 &0 &0 &0\\
0 &0 &0 &0 &0 &-iX
\end{pmatrix} \Rightarrow 
	\begin{aligned}
	\frac{1}{i}(e_{1}-e_{6})(\Ad_{g_{2}^{-1}}(iX))&=2X>0\\
	\frac{1}{i}(e_{2}-e_{6})(\Ad_{g_{2}^{-1}}(iX))&=2X>0\\
	\frac{1}{i}(e_{4}-e_{6})(\Ad_{g_{2}^{-1}}(iX))&=X>0\\
	\frac{1}{i}(e_{5}-e_{6})(\Ad_{g_{2}^{-1}}(iX))&=X>0
\end{aligned}
\end{equation*}
\item[]
\begin{equation*}
g_{3}^{-1}Xg_{3}=\begin{pmatrix}
iX &0 &0 &0 &0 &0\\
0 &iX &0 &0 &0 &0\\
0 &0 &-iX &0 &0 &0\\
0 &0 &0 &-iX &0 &0\\
0 &0 &0 &0 &0 &0\\
0 &0 &0 &0 &0 &0
\end{pmatrix} \Rightarrow 
	\begin{aligned}
	\frac{1}{i}(e_{1}-e_{6})(\Ad_{g_{3}^{-1}}(iX))&=X>0\\
	\frac{1}{i}(e_{2}-e_{6})(\Ad_{g_{3}^{-1}}(iX))&=X>0\\
	\frac{1}{i}(e_{3}-e_{6})(\Ad_{g_{3}^{-1}}(iX))&=-X<0\\
	\frac{1}{i}(e_{4}-e_{6})(\Ad_{g_{3}^{-1}}(iX))&=-X<0
\end{aligned}
\end{equation*}
\end{enumerate}

We can now apply Theorem \ref{signature} and obtain:
\begin{equation*}
\sigma(G//H)=\pm ((-1)^{3}+(-1)^{0}+(-1)^{2})=\pm 1.
\end{equation*}

\end{document}